 \newtheorem{theorem}{Theorem}[section]
\newtheorem{proposition}[theorem]{Proposition}
\newtheorem{lemma}[theorem]{Lemma}
\newtheorem{corollary}[theorem]{Corollary}
\newtheorem{conjecture}[theorem]{Conjecture}
\theoremstyle{definition}
\newtheorem{definition}[theorem]{Definition}
\newtheorem{remark}[theorem]{Remark}
\newtheorem{example}[theorem]{Example}
\begin{document}

\title{Rank functions and invariants of delta-matroids}          
    
\author{Matt Larson}
\date{\today}
\address{Institute for Advanced Study and Princeton University}
\email{mattlarson@princeton.edu}
\begin{abstract}
In this note, we give a rank function axiomatization for delta-matroids and study the corresponding rank generating function. We relate an evaluation of the rank generating function to the number of independent sets of the delta-matroid, and we prove a log-concavity result for that evaluation using the theory of Lorentzian polynomials. 
\end{abstract}
 
\maketitle

\vspace{-20 pt}

\section{Introduction}
Let $[n, \overline{n}]$ denote the set $\{1, \dotsc, n, \overline{1}, \dotsc, \overline{n}\}$, equipped with the obvious involution $\overline{(\cdot)}$. 
Let $\operatorname{AdS}_n$ be the set of \emph{admissible subsets} of $[n, \overline{n}]$, i.e., subsets $S$ that contain at most one of $i$ and $\overline{i}$ for each $i \in [n]$. These are also called \emph{partial transversals}. Set $e_{\overline{i}} := - e_i \in \mathbb{R}^n$, and for each $S \in \operatorname{AdS}_n$, set $e_S = \sum_{a \in S} e_a$. 

\begin{definition}\label{def:delta}
A \emph{delta-matroid} $D$ is a non-empty collection $\mathcal{F} \subset \operatorname{AdS}_n$ of admissible sets of size $n$, called the \emph{feasible sets} of $D$, such that the polytope
$$P(D) := \operatorname{Conv}\{e_B: B \in \mathcal{F}\}$$
has all edges parallel to $e_i$ or $e_i \pm e_j$, for some $i, j$. We say that $D$ is \emph{even} if all edges of $P(D)$ are parallel to $e_i \pm e_j$. 
\end{definition}

Delta-matroids were introduced in \cite{Bou87} by replacing the usual basis exchange axiom for matroids with one involving symmetric difference. In \cite{Bou87}, the above definition is called a \emph{symmetric matroid}. They were defined independently  in \cite{ChandrasekaranKabadi,DressHavel}. For the equivalence of the definition of delta-matroids in those works with the one given above, and for general properties of delta-matroids, see \cite[Chapter 4]{BGW}.

A delta-matroid is even if and only if all sets in $\{B \cap [n] : B \in \mathcal{F}\}$ have the same parity. Even delta-matroids enjoy nicer properties than arbitrary delta-matroids. For instance, they satisfy a version of the symmetric exchange axiom \cite{WenzelExchange}.

There are many constructions of delta-matroids in the literature. Two of the most fundamental come from matroids: given a matroid $M$ on $[n]$, we can construct a delta-matroid on $[n, \overline{n}]$ whose feasible sets are the sets of the form $B \cup \overline{B^c}$, for $B$ a basis of $M$. We can also construct a delta-matroid whose feasible sets are the sets of the form $I \cup \overline{I^c}$, for $I$ independent in $M$. Additionally, there are delta-matroids corresponding to graphs \cite{Duchamp}, graphs embedded in surfaces \cites{CMNR19a,CMNR19b}, and points of a maximal orthogonal or symplectic Grassmannian. Delta-matroids arising from points of a maximal orthogonal or symplectic Grassmannian are called \emph{realizable}. 
See \cite[Section 6.2]{EFLS} for a discussion of delta-matroids associated to points of a maximal orthogonal Grassmannian. 

Given $S, T \in \operatorname{AdS}_n$, we define $S \sqcup T = \{a \in S \cup T \colon \overline{a} \not \in S \cup T\}$. A function $f \colon \operatorname{AdS}_n \to \mathbb{R}$ is called \emph{bisubmodular} if, for all $S, T \in \operatorname{AdS}_n$, 
$$f(S) + f(T) \ge f(S \cap T) + f(S \sqcup T).$$
There is a large literature on bisubmodular functions, beginning with \cite{DunstanWelsh}. They have been studied both from an optimization perspective \cites{FujishigeIwata,FujishigeParametric} and from a polytopal perspective \cites{FujishigePatkar, FujishigeBisubmodular}. Additionally, bisubmodular functions are closely related to jump systems \cite{BouchetCunningham}.

For a delta-matroid $D$, define a function $g_D \colon \operatorname{AdS}_n \to \mathbb{Z}$ by 
$$g_D(S) = \max_{B \in \mathcal{F}} (|S \cap B| - |\overline{S} \cap B|).$$
We call $g_D$ the \emph{rank function} of $D$. Note that $g_D$ may take negative values. 
The collection of feasible subsets of $D$ is exactly $\{S : g_D(S) = n\}$, so $D$ can be recovered from $g_D$. 

\begin{theorem}\label{thm:rank}
A function $g \colon \operatorname{AdS}_n \to \mathbb{Z}$ is the rank function of a delta-matroid if and only if
\begin{enumerate}
\item $g(\emptyset) = 0$ (normalization),
\item $|g(S)| \le 1$ if $|S| = 1$ (boundedness),
\item $g(S) + g(T) \ge g(S \cap T) + g(S \sqcup T)$ (bisubmodularity), and
\item $g(S) \equiv |S| \pmod 2$ (parity).
\end{enumerate}
Furthermore, $D$ is even if and only if
$$g_D(S) = \frac{g_D(S \cup i) + g_D(S \cup \overline{i})}{2} \text{ whenever }|S| = n-1 \text{ and }\{i, \overline{i}\} \cap S = \emptyset.$$
\end{theorem}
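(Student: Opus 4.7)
The necessity of axioms (1)--(4) follows by unpacking the definition of $g_D$. Normalization is immediate, boundedness holds because each coordinate of $e_B$ lies in $\{-1,1\}$, and parity follows from $\langle e_S, e_B\rangle \equiv |S| \pmod 2$ using that $|B| = n$ forces $B$ to meet each pair $\{i, \overline{i}\}$. For bisubmodularity, $g_D$ is the restriction of the support function $h_{P(D)}$ to the signed indicator vectors $e_S$; the edge hypothesis on $P(D)$ translates into the required inequality via a symmetric-exchange argument on pairs of feasible sets optimizing $S$ and $T$.

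For sufficiency, given $g$ satisfying the axioms, set $\mathcal{F} = \{S : g(S) = n\}$. Applying bisubmodularity to $S$ and $T = \{a\}$ when $a, \overline{a} \notin S$ yields $g(S \cup a) \le g(S) + g(\{a\}) \le g(S) + 1$, so inductively $g(S) \le |S|$; in particular every feasible set has size $n$ and $P(D)$ is well defined. It remains to show (i) $g = g_D$ and (ii) the edges of $P(D)$ are parallel to $e_i$ or $e_i \pm e_j$. For (i) I would identify the base polytope $\{x \in \mathbb{R}^n : \langle e_S, x\rangle \le g(S) \text{ for all } S\}$ associated to $g$ with $P(D)$, and deduce $g(S) = h_{P(D)}(e_S) = g_D(S)$; for (ii) the bisubmodularity of $g_D$ pins down the normal fan of $P(D)$, forcing every edge direction into the allowed set. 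The \emph{main obstacle} is this bridge from the combinatorial axioms to the polytopal conclusion, which relies on the theory of bisubmodular polyhedra (Fujishige--Patkar).

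For the evenness criterion, the key observation is that by convexity of $h_{P(D)}$, one always has $2 g_D(S) \le g_D(S \cup i) + g_D(S \cup \overline{i})$, with equality if and only if a single feasible set $B^*$ maximizes $\langle e_S + t e_i, \cdot\rangle$ over $P(D)$ for every $t \in [-1,1]$---equivalently, the $i$-th coordinate is constant on the face $F$ of $P(D)$ on which $\langle e_S, \cdot\rangle$ attains its maximum. For the forward direction, assume for contradiction that $F$ contains $e_{B^*}$ with $i \in B^*$ and $e_{B'}$ with $\overline{i} \in B'$; expanding $\langle e_S, e_{B^*}\rangle = \langle e_S, e_{B'}\rangle$ in terms of the coordinates where each agrees with $S$ forces $|B^* \cap [n]| - |B' \cap [n]| \equiv 1 \pmod 2$, contradicting evenness. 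Conversely, if $D$ is not even, the edge characterization of $P(D)$ furnishes an edge parallel to $e_i$, giving feasible $S \cup i$ and $S \cup \overline{i}$ with $|S| = n-1$ and $i, \overline{i} \notin S$; then $g_D(S \cup i) + g_D(S \cup \overline{i}) = 2n$ while $g_D(S) = n-1$, violating the midpoint identity.
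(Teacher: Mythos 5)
Your overall strategy is the same as the paper's: pass through the correspondence between normalized bisubmodular functions and polyhedra with edge directions $e_i$, $e_i\pm e_j$. The necessity direction is fine, and your argument for the evenness criterion is correct and in fact more self-contained than the paper's (which simply cites \cite[Example 5.2.3]{ACEP20} for the equivalence of the midpoint identity with the absence of edges parallel to $e_i$).

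There is, however, a genuine gap in the sufficiency direction, exactly at the step you flag as ``the main obstacle'' and then defer to the general theory of bisubmodular polyhedra. That theory gives you that $P(g)=\{x:\langle e_S,x\rangle\le g(S)\}$ is a lattice polytope with the right edge directions and with support function $g$; what it does \emph{not} give you is that the vertices of $P(g)$ lie in $\{\pm1\}^n$, which is what identifies $P(g)$ with $\operatorname{Conv}\{e_B: g(B)=n\}$ and makes $g=g_D$ for a genuine delta-matroid $D$. Your sketch never invokes axiom (4): boundedness plus integrality only place the vertices in $[-1,1]^n\cap\mathbb{Z}^n$, which contains points with zero coordinates. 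Concretely, $g\equiv 0$ satisfies (1)--(3), yet $P(g)=\{0\}$ and your $\mathcal{F}=\{S: g(S)=n\}$ is empty, so the construction fails without parity. The missing idea is the paper's observation that $P(g)$ has all vertices in $\{\pm1\}^n$ if and only if $\tfrac12\bigl(P(g)+(1,\dotsc,1)\bigr)$ is a lattice polytope contained in $[0,1]^n$; since $(1,\dotsc,1)$ corresponds to the bisubmodular function $h(S)=|S^+|-|S^-|$ and Minkowski sum corresponds to addition, this half-translate is $P\bigl(\tfrac12(g+h)\bigr)$, which is a lattice polytope precisely when $g(S)+h(S)$ is always even, i.e.\ when $g(S)\equiv|S|\pmod 2$ (as $h(S)\equiv|S|$), and is contained in $[0,1]^n$ precisely when the boundedness axiom holds. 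You need some version of this step to complete the proof.
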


The function $g_D$, as well as the observation that it is bisubmodular, has appeared before in the literature \cites{BouchetRep,ChandrasekaranKabadi}. For example, in \cite[Theorem 4.1]{BouchetRep} it is shown that, if $D$ is represented by a point of the maximal symplectic Grassmannian, then $g_D$ can be computed in terms of the rank of a certain matrix. 
It was known that delta-matroids admit a description in terms of certain bisubmodular functions. Theorem~\ref{thm:rank} answers a special case of \cite[Question 9.4]{ACEP20}.

In \cite{BouShelter,MultimatroidsII}, Bouchet gave a rank-function axiomatization of delta-matroids in the more general setting of multimatroids. His rank function differs from ours --- in Section~\ref{ssec:bouchet}, we discuss the relationship between his results and Theorem~\ref{thm:rank}. An axiomatization of delta-matroids in terms of Bouchet's rank function was given in an unpublished paper of Allys. One can deduce Theorem~\ref{thm:rank} from this result, see Corollary~\ref{cor:hdcharacterization}.

Basic operations on delta-matroids --- like products, deletion, contraction, and projection --- can be simply expressed in terms of rank functions. See Section~\ref{ssec:construction}.

One of the most important invariants of a matroid $M$ of rank $r$ on $[n]$ is its \emph{Whitney rank generating function}. If $\operatorname{rk}_M$ is the rank function of $M$, then the rank generating function is defined as
$$R_M(u, v) := \sum_{A \subset [n]}u^{r - \operatorname{rk}_M(A)}v^{|A| - \operatorname{rk}_M(A)}.$$
The more commonly used normalization is the \emph{Tutte polynomial}, which is $R_M(u-1, v-1)$. 
The characterization of delta-matroids in terms of rank functions allows us to consider an analogously-defined invariant.

\begin{definition}\label{def:U}
Let $D$ be a delta-matroid on $[n, \overline{n}]$. Then we define
$$U_D(u, v) = \sum_{S \in \operatorname{AdS}_n} u^{n - |S|} v^{\frac{|S| - g_D(S)}{2}}.$$
\end{definition}

Note that the bisubmodularity of $g_D$ implies that the restriction of $g_D$ to the subsets of any fixed $S \in \operatorname{AdS}_n$ is submodular. The boundedness of $g_D$ then implies that $|g_D(S)| \le |S|$. Because of the parity requirement, $|S| - g_D(S)$ is divisible by $2$. Therefore $U_D(u, v)$ is indeed a polynomial. The normalization $U_D(u-1, v-1)$ is more analogous to the Tutte polynomial, but it can have negative coefficients. However, the polynomial $U_D(u, v-1)$ has non-negative coefficients (as follows, e.g., from Theorem~\ref{thm:activity}).

The $U$-polynomial of a delta-matroid was introduced by Eur, Fink, Spink, and the author in \cite[Definition 1.4]{EFLS} in terms of a Tutte polynomial-like recursion; see Proposition~\ref{prop:recursive} for a proof that Definition~\ref{def:U} agrees with the recursive definition considered there. 
The specialization $U_D(0, v)$ is the \emph{interlace polynomial} of $D$, which was introduced in \cite{InterlaceBollobas} for graphs and in \cite{BrijderInterlace} for general delta-matroids. See \cite{MorseInterlace} for a survey on the properties of the interlace polynomial. 

Various Tutte polynomial-like invariants of delta-matroids have been considered in the literature, such as the Bollob\'{a}s--Riordan polynomial and its specializations \cite{Bollobas}. In \cite{HopfTutte}, a detailed analysis of delta-matroid polynomials which satisfy a deletion-contraction formula is carried out. Set $\sigma_D(A) = \frac{|A|}{2} + \frac{g_D(A) + g_D(\bar{A})}{4}$ for $A \subset [n]$. Then in \cite{HopfTutte}, the polynomial
$$\sum_{A \subset [n]} (x-1)^{\sigma_D([n]) - \sigma_D(A)}(y-1)^{|A| - \sigma_D(A)}$$
is shown to be, in an appropriate sense, the universal invariant of delta-matroids which satisfies a deletion-contraction formula. This polynomial is a specialization of the Bollob\'{a}s--Riordan polynomial. 
In \cite{IrreducibleTutte}, it is shown that this polynomial has several nice combinatorial properties. This polynomial does not specialize to $U_{D}(u, v)$.

\begin{example}\cite[Example 5.5 and 5.6]{EFLS}
Let $M$ be a matroid of rank $r$ on $[n]$, and let $S = S^+ \cup \overline{S^-} \in \operatorname{AdS}_n$ be an admissible set with $S^+, S^- \subset [n]$. Set $V = \{i \in [n] : S \cap \{i, \bar{i}\} = \emptyset\}$. Previously, we gave two examples of delta-matroids constructed from $M$. We now discuss their $U$-polynomials. 
\begin{enumerate}
\item Let $D$ be the delta-matroid arising from the independent sets of $M$. Then $g_D(S) = |S| + 2\operatorname{rk}_{{M}}(S^+) - 2|S^{+}|$, and
$$U_D(u, v) = (u + 1)^{n-r} R_M\left( u + 3, \frac{ 2u + v + 2}{u + 1} \right).$$
\item Let $D$ be the delta-matroid arising from the bases of $M$. Then $g_D(S) = |S| - 2r +  2 \operatorname{rk}_M(S^+ \cup V) - 2|S^+| + 2 \operatorname{rk}_M(S^+)$, and 
$$U_D(u, v) = \sum_{T \subset S \subset [n]} u^{|S \setminus T|} v^{r - \operatorname{rk}_M(S) + |T| - \operatorname{rk}_M(T)}.$$
\end{enumerate}
\end{example}

We study the $U$-polynomial as a delta-matroid analogue of the rank generating function of a matroid. For a matroid $M$, the evaluation $R_M(u, 0)$ is essentially the $f$-vector of the independence complex of the matroid, i.e., it counts the number of independent sets of $M$ of a given size. The coefficients of the Tutte polynomial $R_M(u-1, v-1)$ can be interpreted as counting bases of $M$ according to their internal and external activities, certain statistics that depend on an ordering of the ground set. See \cite{BackmanTutte}. This shows that $R_M(u, -1)$, the (unsigned) characteristic polynomial of $M$, is essentially the $f$-vector of the broken circuit complex of $M$. 

A set $S \in \operatorname{AdS}_n$ is \emph{independent} if it is contained in a feasible set of a delta-matroid $D$. In \cite{BouShelter}, Bouchet gave an axiomatization of delta-matroids in terms of their independent sets. 
The independent sets form a simplicial complex, called the \emph{independence complex} of $D$. 
We relate $U_D(u, 0)$ to the $f$-vector of the independence complex of $D$ (Proposition~\ref{prop:indep}), which gives linear inequalities between the coefficients of $U_D(u, 0)$. 
We give a combinatorial interpretation of the coefficients of $U_D(u, v-1)$ as counting the number of independent sets of $D$ of a given size according to a delta-matroid version of activity (Theorem~\ref{thm:activity}) which was introduce by Morse \cite{MorseActivity}. 
This shows that $U_D(u, -1)$ is essentially the $f$-vector of a certain simplicial complex associated to $D$.

Following a tradition in matroid theory (see, e.g., \cite{Mason}), and inspired by the ultra log-concavity of $R_M(u, 0)$ \cites{ALGV,BH}, we make three log-concavity conjectures for $U_D(u, 0)$. These conjectures state the sequence of the number of independent sets of a delta-matroid of a given size satisfies log-concavity properties. 
\begin{conjecture}\label{conj:logconc}
Let $D$ be a delta-matroid on $[n, \overline{n}]$, and let $U_D(u, 0) = a_n + a_{n-1}u + \dotsb + a_0 u^n$. Then, for any $k \in \{1, \dotsc, n-1\}$,
\begin{enumerate}
\item\label{conj1} $a_k^2 \ge \frac{n-k+1}{n-k} a_{k+1}a_{k-1}$,
\item\label{conj2} $a_k^2 \ge \frac{2n - k + 1}{2n-k}\frac{k+1}{k} a_{k+1}a_{k-1}$, and
\item $a_k^2 \ge \frac{n - k + 1}{n-k}\frac{k+1}{k} a_{k+1}a_{k-1}$.
\end{enumerate}
\end{conjecture}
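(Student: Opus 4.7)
The plan is to establish part (3), which is the strongest of the three inequalities: then (\ref{conj1}) and (\ref{conj2}) follow at once for a non-negative sequence. Indeed, the ratio of the coefficient in (3) to that in (\ref{conj1}) is $(k+1)/k > 1$, and a short calculation gives $(n-k+1)(2n-k) - (n-k)(2n-k+1) = n > 0$, so the coefficient in (3) also strictly exceeds the one in (\ref{conj2}). Thus (3) implies (\ref{conj1}) and (\ref{conj2}).

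To prove (3), the plan is to use the theory of Lorentzian polynomials \cites{ALGV,BH}. Write $I_j(D)$ for the number of independent sets of $D$ of size $j$, so that $a_k = I_{n-k}(D)$. Consider the multivariate generating polynomial
$$F_D(x_1, \dotsc, x_n, y_1, \dotsc, y_n, z) = \sum_{\substack{S \in \operatorname{AdS}_n \\ g_D(S) = |S|}} z^{n - |S|} \prod_{i \in S \cap [n]} x_i \prod_{\bar j \in S \cap \overline{[n]}} y_j,$$
which is homogeneous of degree $n$ in $2n+1$ variables. The symmetrizing substitution $x_i = y_i = s$ for all $i$ collapses $F_D$ to $\sum_{j} I_j(D)\, s^j z^{n-j}$. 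If $F_D$ is Lorentzian, then so is this symmetrization, since Lorentzianness is preserved under substitutions by non-negative linear forms. The standard characterization of bivariate Lorentzian polynomials then gives log-concavity of $I_j(D)/\binom{n}{j}$ with no internal zeros, which is exactly (3) after unwinding $a_k = I_{n-k}(D)$.

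The main obstacle is showing that $F_D$ itself is Lorentzian. Two plausible routes. The first is a direct verification: show that the support of $F_D$, viewed as a subset of $\mathbb{Z}_{\geq 0}^{2n+1}$, is M-convex, and then establish the Hessian signature condition by induction via partial derivatives. The M-convexity of the support amounts to a combined exchange/augmentation axiom for the independent sets of $D$: given independent $S, T$ and a coordinate in which they differ, either a symmetric exchange of elements between $S$ and $T$ is possible, or the element can be removed from one of $S, T$ (absorbed into the $z$ slack coordinate) while preserving independence. This should follow from Bouchet's independence axiomatization in \cite{BouShelter} together with the bisubmodularity of $g_D$ from Theorem~\ref{thm:rank}, perhaps most transparently in the even case (where the symmetric exchange axiom of \cite{WenzelExchange} applies) and extended to the general case by a parity-doubling reduction. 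The second route is to construct a larger even delta-matroid $\widehat{D}$ on an enlarged ground set so that $F_D$ becomes a specialization of the basis generating polynomial of $\widehat D$, and then to invoke Lorentzianness of basis generating polynomials of even delta-matroids, a result toward which the methods of \cite{EFLS} make progress. In either approach, the essential input is a delta-matroidal analogue of the Br\"anden--Huh theorem on Lorentzianness of matroid basis generating polynomials, and producing this input is the hard part.
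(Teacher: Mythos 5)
The statement you are proving is a \emph{conjecture}: the paper does not prove it, and only establishes part (2) under the additional hypothesis that $D$ admits an enveloping matroid (Theorem~\ref{theorem:logconc}), with part (1) known under the same hypothesis from \cite{EFLS} and part (3) open even in that case. Your reduction of (1) and (2) to (3) is correct, and your observation that (3) is equivalent to the bivariate polynomial $\sum_j I_j(D)\, s^j z^{n-j}$ being Lorentzian is also correct. But the route you propose to (3) fails at the step you yourself flag as ``the hard part,'' and it fails not merely because the input is difficult to produce: the polynomial $F_D$ is simply not Lorentzian in general. Take $n = 2$ and the (even, realizable) delta-matroid $D$ with feasible sets $\{1,2\}$ and $\{\overline{1}, \overline{2}\}$. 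The support of $F_D$ contains the points corresponding to $\{1,2\}$ and $\{\overline{1},\overline{2}\}$, but the exchange required by M-convexity in the $x_1$-coordinate would force $\{2,\overline{1}\}$ or $\{2,\overline{2}\}$ into the support; the first is not independent in $D$ and the second is not admissible. Since the support of a Lorentzian polynomial must be M-convex, $F_D$ is not Lorentzian. Note that this $D$ does have an enveloping matroid (the direct sum of rank-one uniform matroids on $\{1,\overline{2}\}$ and $\{2,\overline{1}\}$), so the obstruction is not confined to pathological delta-matroids, and neither of your two proposed routes can repair it: the first route is exactly the M-convexity that fails, and the second would require a Br\"and\'en--Huh analogue for (even) delta-matroids whose basis generating polynomials likewise have non-M-convex supports.

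The degree of homogenization is the crux. The paper's Theorem~\ref{thm:lor} homogenizes to degree $2n$ (the term is $w_0^{2n-|S|}w^{\underline{S}}$) because the Lorentzian input is the independence generating polynomial of the enveloping matroid, which lives on the $2n$-element set $[n,\overline{n}]$; the resulting binomial normalization is $\binom{2n}{i}$, which is precisely why the paper obtains part (2) and not part (3). Your degree-$n$ homogenization would yield the $\binom{n}{j}$ normalization and hence (3), but as the example shows, no known Lorentzian statement supports it. If you want an unconditional contribution here, the realistic targets are the special cases the paper records: part (3) for delta-matroids arising from independent sets of a matroid (via ultra log-concavity for matroids), and part (2) under the enveloping-matroid hypothesis via the paper's degree-$2n$ argument.
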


Conjecture~\ref{conj:logconc}(1) follows from \cite[Conjecture 1.5]{EFLS}, and it is proven in \cite[Theorem B]{EFLS} when $D$ has an \emph{enveloping matroid} (see Definition~\ref{def:env}). This is a technical condition which is satisfied by many commonly occurring delta-matroids, including all realizable delta-matroids and delta-matroids arising from matroids (although not all delta-matroids, see \cite[Section 4]{BouShelter} and \cite[Example 6.11]{EFLS}). The proof uses algebro-geometric methods. Here we prove a special case of Conjecture~\ref{conj:logconc}(2). Note that Conjecture~\ref{conj:logconc}(1) and Conjecture~\ref{conj:logconc}(2) are incomparable, and both are implied by Conjecture~\ref{conj:logconc}(3). Equality is attained in Conjecture~\ref{conj:logconc}(3) if $D$ has a unique feasible set or if all admissible sets are independent.

\begin{theorem}\label{theorem:logconc}
Let $D$ be a delta-matroid on $[n, \overline{n}]$ which has an enveloping matroid. Let $U_D(u, 0) = a_n + a_{n-1}u + \dotsb + a_0 u^n$. Then, for any $k \in \{1, \dotsc, n-1\}$, $a_k^2 \ge \frac{2n - k + 1}{2n-k}\frac{k+1}{k} a_{k+1}a_{k-1}$, i.e., Conjecture~\ref{conj:logconc}(2) holds. 
\end{theorem}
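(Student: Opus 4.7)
The approach is via the theory of Lorentzian polynomials. Write the conclusion of the theorem as the assertion that the sequence $a_k/\binom{2n}{k}$ is log-concave in $k$. By the standard equivalence between Lorentzian polynomials and ultra-log-concavity of their coefficients, this is a consequence of the bivariate homogeneous polynomial $h_D(x,y) := \sum_{k=0}^{n} a_k x^k y^{2n-k}$ being Lorentzian of degree $2n$. Indeed, if $h(x,y) = \sum_{k=0}^{d} c_k x^k y^{d-k}$ is Lorentzian of degree $d$, then $c_k/\binom{d}{k}$ is log-concave; applying this with $d = 2n$ yields exactly the inequality of Theorem~\ref{theorem:logconc}.

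Setting $v = 0$ in Definition~\ref{def:U} shows that $a_k$ counts the independent sets of $D$ of size $k$. Now let $M$ be an enveloping matroid of $D$ (Definition~\ref{def:env}, \cite{EFLS}); this is a matroid on the $2n$-element ground set $[n,\overline{n}]$. The Br\"and\'en--Huh theorem supplies the Lorentzian (of degree $\operatorname{rk}(M) = n$) independent-set generating polynomial of $M$. To raise it to degree $2n$, multiply by $y^n$; this operation preserves Lorentzian-ness, since one can first introduce a fresh variable $z$ and multiply by $z^n$ (a tensor product of Lorentzian polynomials in disjoint variables), then apply the non-negative linear substitution $z \mapsto y$. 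The result is the multivariate Lorentzian polynomial
$$P_M(\mathbf{x}, y) = \sum_{I \in \mathcal{I}(M)} y^{2n - |I|} \prod_{e \in I} x_e.$$
Specializing $x_e \mapsto x$ for all $e \in [n,\overline{n}]$ (once again a non-negative linear substitution) produces the bivariate Lorentzian polynomial $\sum_{k} I_k(M)\, x^k y^{2n-k}$ of degree $2n$, where $I_k(M)$ denotes the number of independent sets of $M$ of size $k$.

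The main obstacle is the identification $a_k = I_k(M)$, which is where the enveloping matroid hypothesis enters. Independent sets of $D$ are required to be admissible (at most one of $i, \overline{i}$ for each $i$), whereas an arbitrary matroid on $[n,\overline{n}]$ can have non-admissible independent sets; the role of Definition~\ref{def:env} is precisely to reconcile this --- for instance by forcing each pair $\{i,\overline{i}\}$ to be dependent in $M$ (so that every $M$-independent set is automatically admissible) and matching the feasible sets of $D$ with the bases of $M$ (so that admissible independent sets of $M$ are exactly the independent sets of $D$). Once $a_k = I_k(M)$ is established, the bivariate Lorentzian polynomial above coincides with $h_D$, and Conjecture~\ref{conj:logconc}(2) follows. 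If the identification $a_k = I_k(M)$ does not hold literally for the enveloping matroid as defined, one either passes to a closely related matroid with the same admissible independent sets, or modifies the multivariate Lorentzian polynomial before specialization, so that the bivariate output still equals $h_D$.
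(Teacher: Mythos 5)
Your overall strategy is the paper's: reduce Conjecture~\ref{conj:logconc}(2) to the statement that $\sum_k a_k x^k y^{2n-k}$ is Lorentzian of degree $2n$, and produce that polynomial from the Br\"and\'en--Huh result on the (degree-$2n$ homogenized) independence generating polynomial of an enveloping matroid. But the step you yourself flag as ``the main obstacle'' is a genuine gap, and your proposed resolutions do not work. The identification $a_k = I_k(M)$ is false for enveloping matroids: Definition~\ref{def:env} only requires $\operatorname{env}(P(M)) = P(D)$, and it does \emph{not} force the pairs $\{i,\overline{i}\}$ to be dependent in $M$; typical enveloping matroids (e.g.\ those for delta-matroids of bases of a matroid) have many non-admissible independent sets, so $I_k(M) > a_k$ in general. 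Moreover, the fallback of ``passing to a closely related matroid with the same admissible independent sets'' is impossible in general: the admissible independent sets of $M$ are exactly the independent sets of $D$ (Proposition~\ref{prop:envelopingindep}), and the independence complex of a delta-matroid need not be a matroid complex (already for the delta-matroid on $[2,\overline{2}]$ with feasible sets $\{1,2\}$ and $\{\overline{1},\overline{2}\}$ the augmentation axiom fails). ``Modifying the multivariate Lorentzian polynomial before specialization'' is precisely the nontrivial point: deleting terms of a Lorentzian polynomial does not in general preserve the Lorentzian property. The missing idea, which is the heart of the paper's proof of Theorem~\ref{thm:lor}, is to set $w_{\overline{i}} = w_i$ in the Lorentzian polynomial $\sum_{S\ \mathrm{indep.\ in}\ M} w_0^{2n-|S|}w^S$, observe that the surviving monomial is squarefree in $w_1,\dotsc,w_n$ exactly when $S$ is admissible, and then invoke the multiaffine-truncation lemma (Lemma~\ref{lem:multiaffine}, from \cite{DualRoss}) to discard the non-squarefree terms while remaining Lorentzian; Proposition~\ref{prop:envelopingindep} then identifies the result with $\sum_{S\ \mathrm{indep.\ in}\ D} w_0^{2n-|S|}w^{\underline{S}}$, after which your specialization and the bivariate ultra-log-concavity argument go through as in the paper.

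A secondary inaccuracy: Br\"and\'en--Huh do \emph{not} supply a Lorentzian independence generating polynomial homogenized to degree $\operatorname{rk}(M)=n$. That statement is false in general --- specializing variables would give ultra-log-concavity of $I_k(M)$ relative to $\binom{n}{k}$, which already fails for $U_{2,3}$. What is true, and what you in fact need, is the degree-$2n$ (ground-set-size) homogenization, which the paper cites from the proof of \cite[Theorem 4.14]{BH}; so you should quote that directly rather than derive it by multiplying a non-Lorentzian polynomial by $y^n$. This misstatement is harmless once corrected, but the admissibility issue above is a real gap in the argument.
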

Our argument uses the theory of Lorentzian polynomials \cite{BH}. We strengthen Theorem~\ref{theorem:logconc} by proving that a generating function for the independent sets of $D$ is Lorentzian (Theorem~\ref{thm:lor}), which implies the desired log-concavity statement. 
We deduce that this generating function is Lorentzian from the fact that the Potts model partition function of an enveloping matroid is Lorentzian \cite[Theorem 4.10]{BH}.

When $D$ is the delta-matroid arising from the independent sets of a matroid, Conjecture~\ref{conj:logconc}(3) follows from the ultra log-concavity of the number of independent sets of that matroid \cites{ALGV,BH}. When $D$ is the delta-matroid arising from the bases of a matroid $M$ on $[n]$, which has an enveloping matroid by \cite[Proposition 6.10]{EFLS}, Theorem~\ref{theorem:logconc} gives a new log-concavity result. If we set
$$a_k = |\{T \subset S \subset [n] : T \text{ independent in $M$ and $S$ spanning in $M$, } |S \setminus T| = n - k\}|,$$
then Theorem~\ref{theorem:logconc} gives that $a_k^2 \ge \frac{2n - k + 1}{2n-k}\frac{k+1}{k} a_{k+1}a_{k-1}$ for $k \in \{1, \dotsc, n-1\}$. 
\\ 
\noindent
\textbf{Acknowledgements:} We thank Nima Anari, Christopher Eur, Alex Fink, Satoru Fujishige, Steven Noble, and Hunter Spink for enlightening conversations, and we thank Christopher Eur, Steven Noble, Shiyue Li, and the referees for helpful comments on a previous version of this paper.  The author is supported by an NDSEG fellowship.

\section{Rank functions of delta-matroids}\label{sec:rank}

The proof of Theorem~\ref{thm:rank} goes by way of a polytopal description of normalized bisubmodular functions, which we now recall. To a function $f \colon \operatorname{AdS}_n \to \mathbb{R}$ with $f(\emptyset) = 0$, we associate the polytope 
$$P(f) = \{x : \langle e_S, x \rangle \le f(S) \text{ for all non-empty }S \in \operatorname{AdS}_n\}.$$
By \cite[Theorem 4.5]{BouchetCunningham} (or \cite[Theorem 5.2]{ACEP20}), $P(f)$ has all edges parallel to $e_i$ or $e_i \pm e_j$ if and only if $f$ is bisubmodular. In this case, $P(f)$ is a lattice polytope if and only if $f$ is integer-valued. 
For a normalized (i.e., $f(\emptyset) = 0$) bisubmodular function $f$, we can recover $f$ from $P(f)$ via the formula
$$f(S) = \max_{x \in P(f)}\langle e_S, x \rangle.$$
Under this dictionary, the bisubmodular function corresponding to the dilate $k P(f)$ is $kf$, and the bisubmodular function corresponding to the Minkowski sum $P(f) + P(g)$ is $f + g$. 

\begin{proof}[Proof of Theorem~\ref{thm:rank}]
By the polyhedral description of normalized bisubmodular functions, for each delta-matroid $D$ there is a unique normalized bisubmodular function $g$ such that $P(D) = P(g)$. We show that the conditions on a normalized bisubmodular function $g$ for $P(g)$ to have all vertices in $\{-1, 1\}^n$ are exactly those given in Theorem~\ref{thm:rank}, namely that $|g(S)| \le 1$ when $|S| =1$ and $g(S) \equiv |S| \pmod 2$. 

The polytope $P(g)$ has all vertices in $\{\pm 1\}^n$ if and only if $\frac{1}{2}(P(g) + (1, \dotsc, 1))$ is a lattice polytope which is contained in $[0, 1]^n$. 
The normalized bisubmodular function $h$ corresponding to the point $(1, \dotsc, 1)$ takes value $h(S) = |S^+| - |S^{-}|$ on an admissible set of the form $S = S^+ \cup \overline{S^-}$, with $S^+, S^- \subset [n]$.
The polytope $\frac{1}{2}(P(g) + (1, \dotsc, 1))$ is $P(f)$, where $f$ is the normalized bisubmodular function defined by $f := \frac{1}{2}(g + h)$. We note that $P(f)$ is a lattice polytope which is contained in $[0, 1]^n$ if and only if 
\begin{enumerate}
\item $f(i) \in \{0, 1\}$ and $f(\overline{i}) \in \{-1, 0\}$, and
\item $f$ is integer-valued. 
\end{enumerate}
A normalized bisubmodular function $f$ satisfies these conditions if and only if $g$ satisfies the conditions of Theorem~\ref{thm:rank}, giving the characterization of rank functions of delta-matroids.

By \cite[Example 5.2.3]{ACEP20}, the polytope $P(g_D) = P(D)$ has all edges parallel to $e_i \pm e_j$ if and only if $g_D$ satisfies the condition $$g_D(S) = \frac{g_D(S \cup i) + g_D(S \cup \overline{i})}{2} \text{ whenever }|S| = n-1 \text{ and }\{i, \overline{i}\} \cap S = \emptyset.$$
This gives the characterization of even delta-matroids.
\end{proof}

\subsection{Compatibility with delta-matroid operations}\label{ssec:construction}

In this section, we consider several operations on delta-matroids, and we show that the rank function behaves in a simple way under these operations. First we consider minor operations on delta-matroids --- contraction, deletion, and projection. 

\begin{definition}
Let $D$ be a delta-matroid on $[n, \overline{n}]$ with feasible sets $\mathcal{F}$, and let $i \in [n]$. We say that $i$ is a \emph{loop} of $D$ if no feasible set contains $i$, and we say that $i$ is a \emph{coloop} if every feasible set contains $i$. 
\begin{enumerate}
\item If $i$ is not a loop of $D$, then the \emph{contraction} $D/i$ is the delta-matroid with feasible sets $B \setminus i$, for $B \in \mathcal{F}$ containing $i$. 
\item If $i$ is not a coloop of $D$, then the \emph{deletion} $D \setminus i$ is the delta-matroid with feasible sets $B \setminus \overline{i}$, for $B \in \mathcal{F}$ containing $\overline{i}$. 
\item The \emph{projection} $D(i)$ is the delta-matroid with feasible sets $B \setminus \{i, \overline{i}\}$ for $B \in \mathcal{F}$. 
\item If $i$ is a loop or coloop, then set $D/i = D \setminus i = D(i)$. 
\end{enumerate}
\end{definition}
For $A \subset [n]$, we define $D/A, D\setminus A,$ and $D(A)$ to be the delta-matroids on $[n, \bar{n}] \setminus (A \cup \bar{A})$ obtained by successively contracting, deleting, or projecting away from all elements of $A$. Contractions, deletions, and projections at disjoint sets commute with each other, so this is well defined. If $A$ and $B$ are disjoint subsets of $[n]$, then $D/A \setminus B$ is the delta-matroid obtained by contracting $A$ and then deleting $B$, which is the same as first deleting $B$ and then contracting $A$. 

First we describe the rank function of projections. The formula is analogous to the formula for the rank function of a matroid deletion. 

\begin{proposition}\label{prop:projection}
Let $D$ be a delta-matroid on $[n, \overline{n}]$, and let $A \subset [n]$. For each $S \in \operatorname{AdS}_n$ disjoint from $A \cup \overline{A}$, $g_{D(A)}(S) = g_D(S)$.
\end{proposition}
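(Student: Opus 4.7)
The plan is to read the statement off directly from the definition of the rank function. Unwinding the definitions, the feasible sets of $D(A)$ are precisely $\{B \setminus (A \cup \overline{A}) : B \in \mathcal{F}\}$, so
\[
g_{D(A)}(S) = \max_{B \in \mathcal{F}} \bigl(|S \cap (B \setminus (A \cup \overline{A}))| - |\overline{S} \cap (B \setminus (A \cup \overline{A}))|\bigr).
\]

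The key observation I would use is that the hypothesis $S \cap (A \cup \overline{A}) = \emptyset$ forces $\overline{S} \cap (A \cup \overline{A}) = \emptyset$ as well, since the bar operation is an involution on $[n,\overline{n}]$ that swaps $A$ with $\overline{A}$. Consequently, removing the elements of $A \cup \overline{A}$ from any $B \in \mathcal{F}$ alters neither $S \cap B$ nor $\overline{S} \cap B$. Each term in the max above therefore equals the corresponding term $|S \cap B| - |\overline{S} \cap B|$ appearing in the definition of $g_D(S)$, and hence the two maxima agree.

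I do not expect any real obstacle: the proof is a direct unraveling of the definitions, essentially a one-liner once the feasible sets of $D(A)$ are written out. The only subtlety worth a brief sentence is that the loop/coloop convention $D/i = D \setminus i = D(i)$ does not interfere, since the expression $B \setminus \{i,\overline{i}\}$ is defined for every $B \in \mathcal{F}$ regardless of whether $i$ is a loop or coloop; and one should read the max as running over $\mathcal{F}$ with the understanding that distinct $B$'s may yield the same $B \setminus (A \cup \overline{A})$, which is harmless. If a more incremental presentation were preferred, one could instead induct on $|A|$, reducing to the one-element case $A = \{i\}$ and invoking the fact, noted in the text, that projections at disjoint elements commute.
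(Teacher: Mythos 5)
Your proof is correct and follows the same route as the paper: both arguments simply note that since $S$ (and hence $\overline{S}$) is disjoint from $A \cup \overline{A}$, the quantity $|S \cap B| - |\overline{S} \cap B|$ is unchanged when $B$ is replaced by $B \setminus (A \cup \overline{A})$, and these latter sets are exactly the feasible sets of $D(A)$. The extra remarks about the loop/coloop convention and repeated feasible sets are fine but not needed.
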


\begin{proof}
As $S$ is disjoint from $A \cup \overline{A}$, $|B \cap S| - |B \cap \overline{S}|$ depends only on $B \setminus (A \cup \overline{A})$. The feasible sets of $D(A)$ are given by $B \setminus (A \cup \overline{A})$ for $B$ a feasible set of $D$. 
\end{proof}

The rank functions of the contractions and  deletions are described by the following result. The formula is analogous to the formula for the rank function of a matroid contraction. 

\begin{proposition}\label{prop:minors}
Let $D$ be a delta-matroid on $[n, \overline{n}]$. 
Let $A, B \subset [n]$ be disjoint subsets, and let $S \in \operatorname{AdS}_n$ be disjoint from $A \cup B \cup \overline{A} \cup \overline{B}$. Then $g_{D/A \setminus B}(S) = g_D(S \cup A \cup \overline{B}) - g_D(A \cup \overline{B})$. 
\end{proposition}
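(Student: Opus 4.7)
The plan is to prove the formula by induction on $|A|+|B|$, with the base case $|A|+|B|=0$ being a tautology. For the induction step, pick $j \in A \cup B$; without loss of generality, $j \in A$ (the case $j \in B$ is symmetric via the deletion version of the single-element identity). Since contractions, deletions, and projections at disjoint elements commute, $D/A \setminus B = (D/j)/(A \setminus \{j\}) \setminus B$. Applying the induction hypothesis to $D/j$ (on the ground set $[n,\overline{n}]\setminus\{j,\overline{j}\}$) and combining with the base-case identity $g_{D/j}(T) = g_D(T \cup j) - g_D(\{j\})$ for $T \in \operatorname{AdS}_n$ disjoint from $\{j,\overline{j}\}$, the $g_D(\{j\})$ terms cancel to yield the desired formula.

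Hence the heart of the argument is the single-element case $g_{D/j}(S) = g_D(S \cup j) - g_D(\{j\})$ (its deletion analogue is handled symmetrically). When $j$ is a loop of $D$, every feasible set contains $\overline{j}$, so $g_D(\{j\}) = -1$ and $g_D(S \cup j) = g_D(S) - 1$ by direct computation; combined with $D/j = D(j)$ and Proposition~\ref{prop:projection}, this yields the identity. The coloop case is analogous.

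The main obstacle is the case where $j$ is neither a loop nor a coloop. Here, every feasible $F$ contains exactly one of $j$ and $\overline{j}$, so partitioning the defining maximum gives
\[
g_D(S \cup j) = \max\bigl(g_{D/j}(S) + 1,\; g_{D\setminus j}(S) - 1\bigr),
\]
and the desired identity reduces to the inequality $g_{D \setminus j}(S) - g_{D/j}(S) \le 2$. Bisubmodularity of $g_D$ alone is insufficient to force this; I would instead invoke the symmetric exchange axiom for delta-matroids, which is equivalent to the polytopal definition (see \cite[Chapter~4]{BGW}). Concretely, take a feasible $F_2$ achieving $g_{D\setminus j}(S)$ (so $\overline{j} \in F_2$) and any feasible $F_1$ with $j \in F_1$ (which exists since $j$ is not a loop); the exchange axiom applied at the element $j$ of their symmetric difference produces a feasible $F_2'$ obtained from $F_2$ by flipping the $\{j,\overline{j}\}$-coordinate and, possibly, one additional $\{b,\overline{b}\}$-coordinate. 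Then $j \in F_2'$, so $\langle e_S, e_{F_2'}\rangle \le g_{D/j}(S)$; and because $S$ is disjoint from $\{j,\overline{j}\}$, the $\{j,\overline{j}\}$-flip contributes $0$ to $\langle e_S, e_{F_2}-e_{F_2'}\rangle$, while the optional $\{b,\overline{b}\}$-flip contributes at most $2$ in magnitude. This gives $g_{D/j}(S) \ge g_{D\setminus j}(S) - 2$, completing the proof.
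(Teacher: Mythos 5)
Your proof is correct, and its overall skeleton matches the paper's: both arguments reduce to the single-element identity $g_{D/j}(S) = g_D(S \cup j) - g_D(j)$ (and its deletion analogue) and then induct on $|A| + |B|$, with the $g_D(j)$ terms cancelling; that you peel the element off $D$ first while the paper peels it off the minor $D/A \setminus B$ is immaterial. Where you genuinely diverge is in how the single-element case (the paper's Lemma~\ref{lem:minors}) is established. The paper imports Proposition~\ref{prop:greedy}, the greedy-algorithm property from \cite{BouchetCunningham}: applied with the singleton $\{i\}$, it says the maximum of $|(S \cup i) \cap B|$ is already attained among feasible sets containing $i$, which yields $g_D(S \cup i) = g_{D/i}(S) + 1$ in one line. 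You instead split the maximum defining $g_D(S \cup j)$ according to whether the feasible set contains $j$ or $\overline{j}$, reduce the identity to the inequality $g_{D \setminus j}(S) \le g_{D/j}(S) + 2$, and prove that by one application of Bouchet's symmetric-difference exchange axiom, which is legitimate given the equivalence of that definition with Definition~\ref{def:delta} (see \cite{BGW}, which the paper itself cites for exactly this equivalence); your bookkeeping with $\langle e_S, e_{F_2} - e_{F_2'} \rangle$ is accurate, since the $j$-flip is orthogonal to $e_S$ and the optional second flip changes the pairing by at most $2$. So both routes rest on one standard imported fact about delta-matroids --- the greedy property versus the exchange axiom --- and yours is somewhat more hands-on, also treating the loop and coloop cases explicitly via $D/j = D(j)$ and Proposition~\ref{prop:projection}, which the paper handles more tersely through the values $g_D(i), g_D(\overline{i}) \in \{\pm 1\}$ in the first line of its induction.
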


Before proving this, we will need the following property of delta-matroids. It follows, for instance, from the greedy algorithm description of delta-matroids in \cite{BouchetCunningham}. 

\begin{proposition}\label{prop:greedy}
Let $D$ be a delta-matroid on $[n, \overline{n}]$, and let $S \subset T \in \operatorname{AdS}_n$. Let $\mathcal{F}_S$ be the collection of feasible sets $B$ of $D$ that maximize $|S \cap B|$, i.e., have $|S \cap B| = \max_{B' \in \mathcal{F}} |S \cap B'|$. Then 
$$\max_{B \in \mathcal{F}_S} |T \cap B| = \max_{B \in \mathcal{F}} |T \cap B|.$$ 
\end{proposition}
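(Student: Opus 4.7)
The plan is to invoke the greedy algorithm for bisubmodular polyhedra from \cite{BouchetCunningham}. Since $P(D) = P(g_D)$ has vertex set $\{e_B : B \in \mathcal{F}\}$, and since for admissible $T$ and size-$n$ feasible $B$ the identity $|T \cap B| + |\bar{T} \cap B| = |T|$ holds, maximizing $|T \cap B|$ over $\mathcal{F}$ is equivalent to maximizing the linear functional $\langle e_T, \cdot\rangle$ over $P(D)$, and analogously for $S$. Thus it suffices to exhibit a single vertex $e_{B^*}$ of $P(D)$ that simultaneously maximizes $\langle e_S, \cdot\rangle$ and $\langle e_T, \cdot\rangle$.

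Recall the shape of the greedy algorithm for $P(D)$: for any weight vector $c \in \mathbb{R}^n$ and any ordering $\sigma$ of $[n]$ with $|c_{\sigma(1)}| \ge \dotsb \ge |c_{\sigma(n)}|$, sequentially setting $x_{\sigma(j)}$ to the extremal value in the direction $\operatorname{sign}(c_{\sigma(j)})$ subject to the previously-set coordinates yields a vertex of $P(D)$ achieving $\max\langle c, \cdot\rangle$. I would apply this with $c = e_T$ and a tiebreaking order that lists $S^+ \cup S^-$ first, then $(T^+ \cup T^-)\setminus(S^+ \cup S^-)$, then the remaining coordinates. Since $|e_T|$ equals $1$ on $T^+ \cup T^-$ and $0$ elsewhere, this ordering is permissible, so the resulting vertex $v^* = e_{B^*}$ maximizes $|T \cap B|$ over $\mathcal{F}$.

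The crux is that during the first phase the signs of $e_T$ and of $e_S$ coincide on $S^+ \cup S^-$: both are $+1$ on $S^+$ and $-1$ on $S^-$. Hence the greedy procedures for $c = e_T$ and for $c' = e_S$ prescribe identical choices through the first phase, so $v^*$ agrees on $S^+ \cup S^-$ with the vertex that the greedy algorithm for $c' = e_S$ produces. Because $e_S$ is supported on $S^+ \cup S^-$, this agreement already forces $\langle e_S, v^*\rangle = g_D(S)$, proving $B^* \in \mathcal{F}_S$. This gives the nontrivial inequality $\max_{B \in \mathcal{F}_S}|T \cap B| \ge |T \cap B^*| = \max_{B \in \mathcal{F}}|T \cap B|$; the reverse inequality is immediate from $\mathcal{F}_S \subset \mathcal{F}$.

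The main obstacle is simply confirming the flexibility of the greedy algorithm to accommodate the tiebreaking order above, which is standard for bisubmodular polyhedra. An alternative route would be a direct exchange argument on pairs of feasible sets using the polytopal edge structure guaranteed by the definition of a delta-matroid, but the greedy-based proof is cleaner and matches the reference to \cite{BouchetCunningham} suggested in the statement.
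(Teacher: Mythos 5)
Your proposal is correct and is essentially the paper's own argument: the paper gives no written proof, stating only that the proposition "follows from the greedy algorithm description of delta-matroids in Bouchet--Cunningham," and your write-up is exactly that argument made explicit (run the greedy algorithm for $c = e_T$ with the elements of $\underline{S}$ ordered first, so the output vertex simultaneously maximizes $\langle e_S,\cdot\rangle$ and $\langle e_T,\cdot\rangle$). The tie-breaking flexibility you rely on is indeed part of the standard greedy theorem for bisubmodular polyhedra, so there is no gap.
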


First we consider the case when we delete or contract a single element. 
\begin{lemma}\label{lem:minors}
Let $D$ be a delta-matroid on $[n, \overline{n}]$, and let $i \in [n]$. Then
\begin{enumerate}
\item If $i$ is not a loop, then $g_{D/ i}(S) = g_{D}(S \cup i) - 1$,
\item If $i$ is not a coloop, then $g_{D \setminus i}(S) = g_D(S \cup \overline{i}) - 1$. 
\end{enumerate}
\end{lemma}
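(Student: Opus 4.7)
The plan is to reformulate $g_D$ in a way that eliminates the signed quantity in its definition, and then read the lemma off from Proposition~\ref{prop:greedy}.

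First I would prove the identity
$$g_D(S) = 2 \max_{B \in \mathcal{F}} |S \cap B| - |S|$$
for every $S \in \operatorname{AdS}_n$. This follows from a direct bijection: for admissible $S$ and feasible $B$, the involution $a \mapsto \overline{a}$ sends $S \setminus B$ bijectively onto $\overline{S} \cap B$, because $B$, being feasible, contains exactly one element of $\{j, \overline{j}\}$ for each $j \in [n]$. Hence $|\overline{S} \cap B| = |S| - |S \cap B|$, so $|S \cap B| - |\overline{S} \cap B| = 2|S \cap B| - |S|$.

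With this reformulation both sides of (1) become easy to evaluate. Since $i, \overline{i} \notin S$, we have $S \cap (B \setminus i) = S \cap B$ for every feasible $B$ containing $i$, so the reformulation applied to $D/i$ gives
$$g_{D/i}(S) = 2 \max_{B \in \mathcal{F},\, i \in B} |S \cap B| - |S|,$$
while $g_D(S \cup i) - 1 = 2 \max_{B \in \mathcal{F}} |(S \cup i) \cap B| - |S| - 2$. The identity in (1) is therefore equivalent to
$$\max_{B \in \mathcal{F}} |(S \cup i) \cap B| = \max_{B \in \mathcal{F},\, i \in B} |(S \cup i) \cap B|,$$
which is exactly Proposition~\ref{prop:greedy} applied with $\{i\} \subset S \cup i$, together with the observation that, since $i$ is not a loop, $\max_{B \in \mathcal{F}} |\{i\} \cap B| = 1$ and is attained exactly on the feasible sets containing $i$. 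Part (2) is the same argument with $\overline{i}$ in place of $i$: the hypothesis that $i$ is not a coloop guarantees the existence of a feasible set containing $\overline{i}$, which is precisely what is needed so that the greedy reduction identifies the relevant restricted family.

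There is really no obstacle once the reformulation is in place; the only substantive content is the bijective identity that replaces $|S \cap B| - |\overline{S} \cap B|$ with $2|S \cap B| - |S|$, after which the lemma is a direct consequence of Proposition~\ref{prop:greedy}.
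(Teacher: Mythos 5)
Your proposal is correct and follows essentially the same route as the paper: both reduce $|S\cap B|-|\overline S\cap B|$ to $2|S\cap B|-|S|$ and then invoke Proposition~\ref{prop:greedy} with the singleton $\{i\}$ (whose maximizers are exactly the feasible sets containing $i$, since $i$ is not a loop). The only cosmetic difference is that you isolate the reformulation of $g_D$ as a standalone identity with an explicit bijection, where the paper states it inline.
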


\begin{proof}
We do the case of contraction; the case of deletion is identical. Assume that $i$ is not a loop, and let $\mathcal{F}_i$ denote the set of feasible sets of $D$ which contain $i$. Note that $\mathcal{F}_i$ is non-empty, so it is the collection of feasible sets $B$ of $D$ which maximize $|\{i\} \cap B|$. For any $S \in \operatorname{AdS}_n$ with $S \cap \{i, \overline{i}\} = \emptyset$, by Proposition~\ref{prop:greedy} we have that
$$\max_{B \in \mathcal{F}} |(S \cup i) \cap B| = \max_{B \in \mathcal{F}_i} |(S \cup i) \cap B|.$$
For any $B$, $|(S \cup i) \cap B| - |\overline{(S \cup i)} \cap B| = 2|(S \cup i) \cap B| - |S \cup i|$, so we see that
$$\max_{B \in \mathcal{F}} (|(S \cup i) \cap B|  - |\overline{(S \cup i)} \cap B|) = \max_{B \in \mathcal{F}_i} (|(S \cup i) \cap B|  - |\overline{(S \cup i)} \cap B|).$$
The left-hand side is equal to $g_D(S \cup i)$, and the right-hand side is equal to $g_{D/i}(S) + 1$. 
\end{proof}

\begin{proof}[Proof of Proposition~\ref{prop:minors}]
First note that $g_D(i) = 1$ if $i$ is not a loop and is $-1$ if $i$ is a loop, and similarly $g_D(\overline{i}) = 1$ if $i$ is not a coloop and is $-1$ is $i$ is a coloop. So Lemma~\ref{lem:minors} implies the result holds when $|A \cup B| = 1$. 

We induct on the size of $A \cup B$. We consider the case of adding an element $i \in [n]$ to $A$; the case of adding it to $B$ is identical. We compute:
\begin{equation*}\begin{split}
g_{D/(A \cup i) \setminus B}(S) &= g_{D/A \setminus B}(S \cup i) - g_{D/A \setminus B}(i) \\  
&= g_{D}(S \cup A \cup \overline{B} \cup i) - g_D(A \cup \overline{B}) - (g_{D}(A \cup \overline{B} \cup i) - g_D(A \cup \overline{B})) \\ 
&= g_D(S \cup (A \cup i) \cup \overline{B}) - g_D((A \cup i) \cup \overline{B}). \qedhere
\end{split}\end{equation*}
\end{proof}

For two non-negative integers $n_1, n_2$, identify the disjoint union of $[n_1]$ and $[n_2]$ with $[n_1 + n_2]$. 
Given two delta-matroids $D_1, D_2$ on $[n_1]$ and $[n_2]$, let $D_1 \times D_2$ be the delta-matroid on $[n_1 + n_2]$ whose feasible sets are $B_1 \cup B_2$, for $B_j$ a feasible set of $D_j$. Then we have the following description of the rank function of $D_1 \times D_2$.

\begin{proposition}\label{prop:product}
Let $D_1, D_2$ be delta-matroids on $[n_1]$ and $[n_2]$, and let $S = S_1 \cup S_2$ be an admissible subset of $[n_1 + n_2, \overline{n_1 + n_2}]$, with $S_1 \subset [n_1, \overline{n}_1]$ and $S_2 \subset [n_2, \overline{n}_2]$. Then $g_{D_1 \times D_2}(S) = g_{D_1}(S_1) + g_{D_2}(S_2)$. 
\end{proposition}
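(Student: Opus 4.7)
The proof should be a direct unwinding of the definition of the rank function. The plan is to observe that since $S_1$ and $S_2$ live in disjoint pieces of the ground set, every quantity appearing in the definition of $g_{D_1 \times D_2}(S)$ splits cleanly into a $D_1$-part and a $D_2$-part.

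More concretely, I would fix any feasible set of $D_1 \times D_2$, which by construction has the form $B_1 \cup B_2$ with $B_j$ feasible in $D_j$, and write
\[
|S \cap (B_1 \cup B_2)| - |\overline{S} \cap (B_1 \cup B_2)| = \bigl(|S_1 \cap B_1| - |\overline{S_1} \cap B_1|\bigr) + \bigl(|S_2 \cap B_2| - |\overline{S_2} \cap B_2|\bigr),
\]
using that $B_1$ is disjoint from $[n_2,\overline{n}_2]$ (and hence from $S_2$ and $\overline{S_2}$) and symmetrically for $B_2$. The right-hand side is a sum of two independent quantities, so maximizing over feasible sets $B_1 \cup B_2$ of $D_1 \times D_2$ decouples into maximizing the first summand over feasible $B_1 \in \mathcal{F}(D_1)$ and the second over feasible $B_2 \in \mathcal{F}(D_2)$. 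This gives exactly $g_{D_1}(S_1) + g_{D_2}(S_2)$.

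There is essentially no obstacle here; the only thing to verify is the disjointness of the supports of $B_1$ and the ``$S_2$-side'' (and vice versa), which is immediate from the identification of $[n_1] \sqcup [n_2]$ with $[n_1+n_2]$. One could alternatively prove this polytopally, noting that $P(D_1 \times D_2) = P(D_1) \times P(D_2)$ and that the support function of a product polytope on the block-diagonal vector $e_{S_1} \oplus e_{S_2}$ is the sum of the support functions, but the cardinality computation above is shorter and self-contained.
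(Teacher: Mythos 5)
Your argument is correct and is essentially the paper's proof: both unwind the definition, use the disjointness of the two ground sets to split $|S \cap (B_1 \cup B_2)| - |\overline{S} \cap (B_1 \cup B_2)|$ into a $D_1$-part and a $D_2$-part, and observe that the maximization decouples. No gaps.
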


\begin{proof}
Let $B_1$ be a feasible set of $D_1$ with $g_{D_1}(S_1) = |S_1 \cap B_1| - |\overline{S_1} \cap B_1|$, and let $B_2$ be a feasible set of $D_2$ with $g_{D_2}(S_2) = |S_2 \cap B_2| - |\overline{S_2} \cap B_2|$. Then $B_1 \cup B_2$ maximizes $B \mapsto |S \cap B| - |\overline{S} \cap B|$, and so $g_{D_1 \times D_2}(S) = |S_1 \cap B_1| - |\overline{S_1} \cap B_1| + |S_2 \cap B_2| - |\overline{S_2} \cap B_2| = g_{D_1}(S_1) + g_{D_2}(S_2)$. 
\end{proof}

We now study how the rank function behaves under the operation of \emph{twisting}. Let $W$ be the \emph{signed permutation group}, the subgroup of the symmetric group on $[n, \overline{n}]$ which preserves $\operatorname{AdS}_n$. In other words, $W$ consists of permutations $w$ such that $w(\overline{i}) = \overline{w(i)}$. As delta-matroids are collections of admissible sets, $W$ acts on the set of delta-matroids on $[n, \bar{n}]$. This action is usually called twisting in the delta-matroid literature. 
\begin{proposition}\label{prop:twist}
Let $D$ be a delta-matroid on $[n, \overline{n}]$, and let $w \in W$. Then $g_{w \cdot D}(S) = g_{D}(w^{-1} \cdot S)$. 
\end{proposition}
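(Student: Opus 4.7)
The plan is to unwind the definitions and use the fact that a signed permutation $w \in W$ acts by bijection on $[n,\overline{n}]$ and commutes with the involution $\overline{(\cdot)}$.

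First I would observe that the feasible sets of $w \cdot D$ are exactly the sets $w \cdot B$ for $B \in \mathcal{F}$, so by definition
$$g_{w \cdot D}(S) = \max_{B \in \mathcal{F}}\bigl(|S \cap (w \cdot B)| - |\overline{S} \cap (w \cdot B)|\bigr).$$
Next, since $w$ is a bijection of $[n,\overline{n}]$, intersecting after applying $w$ is the same as intersecting before, i.e., $|S \cap (w \cdot B)| = |w^{-1} \cdot S \cap B|$, and similarly for $\overline{S}$.

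Then I would invoke the defining property of $W$, namely $w^{-1}(\overline{a}) = \overline{w^{-1}(a)}$ for all $a \in [n,\overline{n}]$, to conclude $w^{-1} \cdot \overline{S} = \overline{w^{-1} \cdot S}$. Substituting gives
$$g_{w \cdot D}(S) = \max_{B \in \mathcal{F}}\bigl(|(w^{-1} \cdot S) \cap B| - |\overline{w^{-1} \cdot S} \cap B|\bigr) = g_D(w^{-1} \cdot S),$$
which is the claim.

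I do not anticipate a genuine obstacle here: the entire content is the compatibility of $w$ with the bar involution and with cardinalities of intersections. The only thing to be slightly careful about is that $w^{-1} \cdot S$ is still an admissible subset (so that $g_D$ is defined on it), which is immediate from the fact that $W$ preserves $\operatorname{AdS}_n$.
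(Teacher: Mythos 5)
Your proposal is correct and follows essentially the same route as the paper: both rest on the identity $|S \cap (w \cdot B)| - |\overline{S} \cap (w \cdot B)| = |(w^{-1} \cdot S) \cap B| - |\overline{(w^{-1} \cdot S)} \cap B|$, using that $w$ is a bijection commuting with the bar involution, and then take the maximum over feasible sets. You simply spell out the intermediate steps that the paper leaves implicit.
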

\begin{proof}
Note that, for $B$ a feasible set of $D$, $|S \cap (w \cdot B)| - |\overline{S} \cap (w \cdot B)| = |(w^{-1} \cdot S) \cap B| - |\overline{(w^{-1} \cdot S)} \cap B|$, which implies the result. 
\end{proof}

Let $S \in \operatorname{AdS}_n$ be an admissible set of size $n$. For any delta-matroid $D$ on $[n, \overline{n}]$, let $r$ be the maximal value of $|S \cap B|$. Then $\{S \cap B \colon B \in \mathcal{F}, |S \cap B| = r\}$ is the set of bases of a matroid on $S$. When $S = [n]$, this is sometimes called the upper matroid of $D$. We describe the rank function of this matroid in terms of the rank function of $D$. 

\begin{proposition}\label{prop:maxmatroid}
Let $S \in \operatorname{AdS}_n$ be an admissible set of size $n$, and let $D$ be a delta-matroid on $[n, \overline{n}]$ with $r = \max_{B \in \mathcal{F}} |S \cap B|$. The matroid $M$ on $S$ whose bases are $\{S \cap B \colon B \in \mathcal{F}, |S \cap B| = r\}$ has rank function
$$\operatorname{rk}_M(T) = \frac{g_D(T) + |T|}{2}.$$
\end{proposition}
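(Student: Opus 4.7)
The plan is to unwind the definition of $\operatorname{rk}_M(T)$ and compare with $g_D(T)$ directly, exploiting the fact that both $S$ and every feasible set $B$ of $D$ are admissible subsets of $[n,\overline{n}]$ of size $n$. This forces $S$ and $B$ to each contain exactly one of $\{i,\overline{i}\}$ for every $i \in [n]$.

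First, I would observe that for any $T \subset S$ and any feasible $B$, each $a \in T$ lies in $S$, so $a$ and $\overline{a}$ are the two candidates from the pair $\{i,\overline{i}\}$ (where $a \in \{i,\overline{i}\}$), exactly one of which lies in $B$. Hence
$$|T \cap B| + |\overline{T} \cap B| = |T|,$$
which rearranges to $|T \cap B| - |\overline{T} \cap B| = 2|T \cap B| - |T|$. Taking the maximum over $B \in \mathcal{F}$ yields
$$g_D(T) \;=\; 2\,\max_{B \in \mathcal{F}} |T \cap B| \;-\; |T|.$$

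The remaining point is to show that this unrestricted max agrees with the max over the subcollection $\mathcal{F}_S := \{B \in \mathcal{F} : |S \cap B| = r\}$, which is exactly $\operatorname{rk}_M(T)$. For this I would apply Proposition~\ref{prop:greedy} with its roles of $S$ and $T$ swapped: since $T \subset S$, letting $\mathcal{F}_T$ be the feasible sets maximizing $|T \cap B|$, the proposition provides some $B_0 \in \mathcal{F}_T$ with $|S \cap B_0| = r$, i.e.\ $B_0 \in \mathcal{F}_S$. Then
$$\max_{B \in \mathcal{F}_S} |T \cap B| \;\ge\; |T \cap B_0| \;=\; \max_{B \in \mathcal{F}} |T \cap B|,$$
and the reverse inequality is trivial. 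Combining with the previous display gives $\operatorname{rk}_M(T) = \max_{B \in \mathcal{F}_S}|T \cap B| = \tfrac{g_D(T) + |T|}{2}$.

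I do not anticipate any real obstacle here: the only nontrivial input is Proposition~\ref{prop:greedy}, and the matching between $|T \cap B| - |\overline{T}\cap B|$ and $2|T\cap B| - |T|$ is an immediate consequence of the size-$n$ admissibility that both $S$ and $B$ enjoy. The same identity also transparently recovers the fact that $M$ is a matroid, since $T \mapsto \tfrac{g_D(T)+|T|}{2}$ inherits submodularity and the usual unit-increment property from the bisubmodularity and boundedness of $g_D$ restricted to subsets of the fixed admissible set $S$.
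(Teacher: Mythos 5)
Your proposal is correct and follows essentially the same route as the paper: rewrite $g_D(T)$ as $2\max_{B\in\mathcal{F}}|T\cap B|-|T|$ via the admissibility/size-$n$ identity, and then invoke Proposition~\ref{prop:greedy} (with the smaller set $T$ inside the size-$n$ set $S$) to see that some $B$ maximizing $|T\cap B|$ also satisfies $|S\cap B|=r$, so the maximum over $\mathcal{F}_S$ equals the maximum over $\mathcal{F}$. The concluding aside about recovering that $M$ is a matroid is extra and not needed, but it does not affect the argument.
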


\begin{proof}
Let $\mathcal{F}_{S}$ be the collection of feasible sets $B$ with $|S \cap B| = r$. Then we have that
$$\operatorname{rk}_M(T) = \max_{B \in \mathcal{F}_{S}} |T \cap B| \le \max_{B \in \mathcal{F}} |T \cap B| = \frac{g_D(T) + |T|}{2}.$$
On the other hand, by Proposition~\ref{prop:greedy} there is a feasible set $B$ which maximizes $|T \cap B|$ and has $|S \cap B| = r$, so we have equality. 
\end{proof}

\subsection{An alternative normalization}\label{ssec:bouchet}

The results of the previous section, particularly Proposition~\ref{prop:maxmatroid}, suggest that an alternative normalization of the rank function of a delta-matroid has nice properties. Set
$$h_D(S) := \frac{g_D(S) + |S|}{2}.$$
The function $h_D(S)$ is integer-valued and bisubmodular, and the polytope it defines is $P(h_D) = \frac{1}{2}(P(D) + \square)$, where $\square = [-1, 1]^n$ is the cube and the sum is Minkowski sum.  This is because the bisubmodular function corresponding to $\square$ is $S \mapsto |S|$. Note that the function $h_D$ is non-negative and increasing, in the sense that if $S \subset T \in \operatorname{AdS}_n$, then $h_D(S) \le h_D(T)$. 
Theorem~\ref{thm:rank} implies the following characterization of the functions arising as $h_D$ for some delta-matroid $D$. In \cite[Theorem 2.16]{MultimatroidsII}, this characterization of the functions $h_D$ is stated with a reference to an unpublished paper of Allys. 
\begin{corollary}\label{cor:hdcharacterization}
A function $h \colon \operatorname{AdS}_n \to \mathbb{Z}$ is equal to $h_D$ for some delta-matroid $D$ if and only if
\begin{enumerate}
\item $h(\emptyset) = 0$,
\item $h(S) \in \{0, 1\}$ if $|S| = 1$,
\item $h(S) + h(T) \ge h(S \cap T) + h(S \sqcup T) + |S \cap \overline{T}|$. 
\end{enumerate}
\end{corollary}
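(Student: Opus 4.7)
The plan is to deduce the corollary directly from Theorem~\ref{thm:rank} via the invertible change of variables
\[ g(S) = 2h(S) - |S|, \qquad h(S) = \tfrac{1}{2}\bigl(g(S) + |S|\bigr), \]
which is a bijection between $\mathbb{Z}$-valued functions $g$ on $\operatorname{AdS}_n$ satisfying the parity condition $g(S) \equiv |S| \pmod{2}$ and arbitrary $\mathbb{Z}$-valued functions $h$. I would then check that the four conditions of Theorem~\ref{thm:rank} on $g$ transform into the three conditions of the corollary on $h$.

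Normalization is immediate: $g(\emptyset) = 0$ iff $h(\emptyset) = 0$. Boundedness is nearly immediate as well: when $|S| = 1$ we have $h(S) = (g(S) + 1)/2$, so $|g(S)| \le 1$ iff $h(S) \in \{0, 1\}$. The parity condition on $g$ is precisely the statement that $h$ takes integer values, which is built into the statement of the corollary.

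The substantive step is to translate bisubmodularity. Substituting $g = 2h - |\cdot|$ and rearranging yields
\[
g(S) + g(T) - g(S \cap T) - g(S \sqcup T) = 2\bigl(h(S) + h(T) - h(S \cap T) - h(S \sqcup T)\bigr) - \bigl(|S| + |T| - |S \cap T| - |S \sqcup T|\bigr),
\]
so $g$ is bisubmodular iff
\[ h(S) + h(T) - h(S \cap T) - h(S \sqcup T) \;\ge\; \tfrac{1}{2}\bigl(|S| + |T| - |S \cap T| - |S \sqcup T|\bigr). \]
It remains to identify the correction term with the combinatorial quantity appearing in condition (3). Using the standard inclusion-exclusion identity $|S| + |T| = |S \cap T| + |S \cup T|$, the correction reduces to $|S \cup T| - |S \sqcup T|$, which by the definition of $S \sqcup T$ counts those $a \in S \cup T$ whose complement $\overline{a}$ also lies in $S \cup T$. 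By admissibility of $S$ and $T$, such elements come in pairs $\{a, \overline{a}\}$ with exactly one member in $S$ and the other in $T$, matching the quantity of pairs encoded by $|S \cap \overline{T}|$.

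I do not anticipate any real obstacle; the argument is routine bookkeeping once the change of variables is in place, with the only delicate point being the final combinatorial interpretation of $|S \cup T| - |S \sqcup T|$ via admissibility, which I would dispatch in a single sentence.
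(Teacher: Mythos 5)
Your overall strategy is exactly the paper's: the paper's entire proof of this corollary is the one-line remark that the three conditions are precisely what is needed for $g(S) := 2h(S) - |S|$ to satisfy the conditions of Theorem~\ref{thm:rank}, and your change of variables, your treatment of normalization, boundedness, and parity, and your reduction of condition (3) to bisubmodularity of $g$ all follow that route correctly in structure.

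The problem is the final identification of the correction term, where there is a genuine factor-of-two gap. The exact translation of bisubmodularity is
\[
h(S)+h(T) \;\ge\; h(S\cap T)+h(S\sqcup T)+\tfrac{1}{2}\bigl(|S\cup T|-|S\sqcup T|\bigr),
\]
and, as you observe, the elements of $S\cup T$ that are removed when passing to $S\sqcup T$ come in involution-pairs with exactly one member of each pair lying in $S\cap\overline{T}$; hence $|S\cup T|-|S\sqcup T| = 2\,|S\cap\overline{T}|$, so the correction term equals $|S\cap\overline{T}|$ --- not $|S\cap\overline{T}|/2$ as in the statement you are proving. (Already for $n=1$, $S=\{1\}$, $T=\{\overline{1}\}$, bisubmodularity of $g$ reads $h(S)+h(T)\ge 1=|S\cap\overline{T}|$.) Your write-up obscures this twice: you silently drop the $\tfrac{1}{2}$ when you assert that ``the correction reduces to $|S\cup T|-|S\sqcup T|$'', and the closing phrase ``matching the quantity of pairs encoded by $|S\cap\overline{T}|$'' never commits to a coefficient. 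As a result the proposal does not establish the statement as printed: condition (3) with $|S\cap\overline{T}|/2$ is a strictly weaker inequality whenever $|S\cap\overline{T}|\ge 2$ (e.g.\ $S=\{1,2\}$, $T=\{\overline{1},\overline{2}\}$), even after invoking integrality of $h$, and you give no argument that it still forces bisubmodularity of $g$. The condition you actually derive, with coefficient $1$, coincides with condition (3) of Proposition~\ref{prop:allysrank}; the $/2$ in the printed corollary appears to be an error in the paper, but a complete solution must either prove the statement as given or explicitly identify and resolve the discrepancy rather than eliding it.
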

Indeed, these are exactly the conditions we need for $g(S) := 2h(S) - |S|$ to satisfy the conditions in Theorem~\ref{thm:rank}. Note that one can also deduce Theorem~\ref{thm:rank} from Corollary~\ref{cor:hdcharacterization}.

The function $h_D$ was studied by Bouchet in \cite{BouShelter,MultimatroidsII} in the more general setting of multimatroids. The following alternative characterization of the functions $h_D$ follows from \cite[Proposition 4.2]{BouShelter}:
\begin{proposition}\label{prop:bouchetrank}
A function $h \colon \operatorname{AdS}_n \to \mathbb{Z}$ is equal to $h_D$ for some delta-matroid $D$ if and only if
\begin{enumerate}
\item $h(\emptyset) = 0$,
\item $h(S) \le h(S \cup a) \le h(S) + 1$ if $S \cup a$ is admissible,
\item $h(S) + h(T) \ge h(S \cap T) + h(S \cup T)$ if $S \cup T$ is admissible, and 
\item $h(S \cup i) + h(S \cup \bar{i}) \ge 2h(S) + 1$ if $S \cap \{i, \bar{i}\} = \emptyset$.
\end{enumerate}
\end{proposition}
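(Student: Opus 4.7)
The plan is to derive the equivalence by translating between $g_D$ and $h_D$ via $h_D(S) = \tfrac{1}{2}(g_D(S)+|S|)$, so that Theorem~\ref{thm:rank} reduces the problem to the purely formal claim that a function $h \colon \operatorname{AdS}_n \to \mathbb{Z}$ satisfies Bouchet's four axioms if and only if $g(S) := 2h(S) - |S|$ satisfies the four axioms of Theorem~\ref{thm:rank}. The key starting observation is that because each feasible set $B$ contains exactly one of $\{i, \overline{i}\}$, we have $|S \cap B| + |\overline{S} \cap B| = |S|$ for every $S \in \operatorname{AdS}_n$, and therefore
$$h_D(S) = \max_{B \in \mathcal{F}} |S \cap B|.$$

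For the forward direction, I would assume $h = h_D$ and read off each Bouchet axiom from this identity. Axiom (1) is immediate. For (2), observe that $|(S \cup a) \cap B| - |S \cap B| \in \{0,1\}$ for every feasible $B$ and every $a$ with $S \cup a$ admissible, and take the max over $B$. For (3), when $S \cup T \in \operatorname{AdS}_n$, apply Proposition~\ref{prop:greedy} to the chain $S \cap T \subset S \cup T$ to obtain a feasible $B$ that simultaneously realizes $h_D(S \cap T)$ and $h_D(S \cup T)$; then $h_D(S) + h_D(T) \ge |S \cap B| + |T \cap B| = |(S \cap T) \cap B| + |(S \cup T) \cap B|$. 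For (4), any $B$ realizing $h_D(S)$ contains exactly one of $i, \overline{i}$, which forces $h_D(S \cup i) + h_D(S \cup \overline{i}) \ge (h_D(S)+1) + h_D(S)$.

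For the reverse direction, set $g(S) := 2h(S) - |S|$ and verify the conditions of Theorem~\ref{thm:rank}. Normalization and parity are immediate from the definition, and Bouchet~(2) with $S = \emptyset$ gives $h(\{a\}) \in \{0,1\}$, hence $|g(\{a\})| \le 1$. The main step is bisubmodularity. Using the identity $|S| + |T| - |S \cap T| - |S \sqcup T| = 2|S \cap \overline{T}|$ (each conflict pair $\{b, \overline{b}\} \subset S \cup T$ contributes $2$ to $|S \cup T|$ but $0$ to $|S \sqcup T|$), bisubmodularity of $g$ is equivalent to
$$h(S) + h(T) \ge h(S \cap T) + h(S \sqcup T) + |S \cap \overline{T}|. \qquad (\star)$$
I would prove $(\star)$ by induction on $k := |S \cap \overline{T}|$. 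The base case $k = 0$ has $S \cup T$ admissible and $S \sqcup T = S \cup T$, so $(\star)$ reduces to Bouchet~(3). For the inductive step, pick $a \in S \cap \overline{T}$ and set $S_0 := S \setminus a$, $T_0 := T \setminus \overline{a}$. A short direct computation gives $S_0 \cap T = S \cap T_0 = S \cap T$, $S_0 \sqcup T = (S \sqcup T) \cup \overline{a}$, $S \sqcup T_0 = (S \sqcup T) \cup a$, and $|S_0 \cap \overline{T}| = |S \cap \overline{T_0}| = k-1$. Summing the inductive hypothesis applied to $(S_0, T)$ and to $(S, T_0)$, and using Bouchet~(4) on $S \sqcup T$ (which is disjoint from $\{a, \overline{a}\}$) to bound $h((S \sqcup T) \cup a) + h((S \sqcup T) \cup \overline{a}) \ge 2 h(S \sqcup T) + 1$, yields
$$h(S) + h(T) + h(S_0) + h(T_0) \ge 2 h(S \cap T) + 2 h(S \sqcup T) + 2k - 1.$$
Bouchet~(2) gives $h(S_0) \le h(S)$ and $h(T_0) \le h(T)$, so $h(S) + h(T) \ge h(S \cap T) + h(S \sqcup T) + k - \tfrac{1}{2}$, which upgrades to $\ge k$ by the integrality of $h$.

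The main obstacle is precisely this bisubmodularity induction: it requires simultaneously tracking how $S \cap T$, $S \sqcup T$, and the conflict set $S \cap \overline{T}$ behave when one element is trimmed from each of $S$ and $T$, and it is the combined use of Bouchet~(4) and integrality that produces the sharp strengthening from $k - \tfrac{1}{2}$ to $k$; the other axioms are essentially formal translations.
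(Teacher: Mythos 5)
Your proof is correct, and it takes a genuinely different route from the paper: the paper does not prove Proposition~\ref{prop:bouchetrank} at all, but simply records it as a consequence of \cite[Proposition 4.2]{BouShelter}. Your forward direction is a routine verification from the identity $h_D(S) = \max_{B \in \mathcal{F}}|S \cap B|$ (with Proposition~\ref{prop:greedy} supplying axiom~(3) and the fact that a feasible set meets each pair $\{i,\bar{i}\}$ supplying axiom~(4)), and all of these steps check out. The real content is your reverse direction: the inequality $(\star)$ you establish is exactly Allys's condition from Proposition~\ref{prop:allysrank}, equivalent to bisubmodularity of $g = 2h - |\cdot|$, and the paper explicitly remarks after Proposition~\ref{prop:allysrank} that deducing this from Bouchet's axioms ``does not seem obvious.'' Your induction on $|S \cap \overline{T}|$ resolves exactly that point: the set-theoretic identities $S_0 \cap T = S \cap T_0 = S \cap T$, $S_0 \sqcup T = (S\sqcup T) \cup \bar{a}$, $S \sqcup T_0 = (S \sqcup T)\cup a$ are all correct, axiom~(4) legitimately applies to $S \sqcup T$ since neither $a$ nor $\bar a$ lies in it, and the upgrade from $k - \tfrac12$ to $k$ via integrality is sound. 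What your approach buys is a self-contained proof that bypasses the external reference entirely and, as a byproduct, gives a direct derivation of Allys's axioms (and hence of the conditions in Corollary~\ref{cor:hdcharacterization}, modulo the factor of $2$ there, which appears to be a typo in the paper: the computation $|S|+|T|-|S\cap T|-|S\sqcup T| = 2|S\cap\overline{T}|$ shows the correct increment is $|S \cap \overline{T}|$, matching your $(\star)$) from Bouchet's axioms --- precisely the implication the paper leaves open.
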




\section{The $U$-polynomial}

We now study the $U$-polynomial of delta-matroids. We prove the following recursion for $U_D(u, v)$, which was the original definition of the $U$-polynomial in \cite[Definition 1.4]{EFLS}. 

\begin{proposition}\label{prop:recursive}
If $n = 0$, the $U_D(u, v) = 1$. For any $i \in [n]$, the $U$-polynomial satisfies
\begin{equation*}
U_D(u, v) = \begin{cases} U_{D/i}(u,v) + U_{D \setminus i}(u, v) + uU_{D(i)}(u, v), &\text{ $i$ is neither a loop nor a coloop} \\ (u + v + 1) \cdot U_{D \setminus i}(u, v), & \text{ $i$ is a loop or a coloop}.\end{cases} 
\end{equation*}
\end{proposition}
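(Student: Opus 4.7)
The plan is to proceed by induction on $n$. The base case $n=0$ is immediate since $\operatorname{AdS}_0 = \{\emptyset\}$ and $g_D(\emptyset) = 0$.

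For the inductive step, fix $i \in [n]$ and partition the sum defining $U_D(u,v)$ into three subsums according to whether $S \in \operatorname{AdS}_n$ contains $i$, contains $\overline{i}$, or is disjoint from $\{i, \overline{i}\}$. In each case, I would write $S = S' \cup X$ where $X \subset \{i, \overline{i}\}$ and $S'$ is an admissible subset of $[n, \overline{n}] \setminus \{i, \overline{i}\}$, which I identify with an element of $\operatorname{AdS}_{n-1}$. Note $n - |S| = (n-1) - |S'|$ when $X$ is empty, so this case carries an extra factor of $u$; when $|X| = 1$, the exponents of $u$ on the $S$ and $S'$ sides agree.

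In the case where $i$ is neither a loop nor a coloop, Proposition~\ref{prop:projection} gives $g_D(S') = g_{D(i)}(S')$, and Lemma~\ref{lem:minors} gives $g_D(S' \cup i) = g_{D/i}(S') + 1$ and $g_D(S' \cup \overline{i}) = g_{D \setminus i}(S') + 1$. Substituting, the exponent of $v$ in each term transforms to the corresponding exponent for the minor, and the three subsums become $u \cdot U_{D(i)}(u,v)$, $U_{D/i}(u,v)$, and $U_{D \setminus i}(u,v)$.

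In the case where $i$ is a loop, the convention identifies $D/i = D \setminus i = D(i)$. Because feasible sets have size $n$ and are admissible, each contains exactly one element of $\{i, \overline{i}\}$; the loop hypothesis forces every feasible set to contain $\overline{i}$. Computing $g_D$ directly from the definition (or applying Lemma~\ref{lem:minors}(2) when possible), one obtains $g_D(S') = g_{D \setminus i}(S')$, $g_D(S' \cup \overline{i}) = g_{D \setminus i}(S') + 1$, and most notably $g_D(S' \cup i) = g_{D \setminus i}(S') - 1$, since now $i \notin B$ and $\overline{i} \in B$ for every feasible $B$, so $i$ contributes $0$ and $\overline{i}$ contributes $-1$ to $|S \cap B| - |\overline{S} \cap B|$. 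Substituting, the three subsums become $u \cdot U_{D \setminus i}$, $U_{D \setminus i}$, and $v \cdot U_{D \setminus i}$, summing to $(u + v + 1) U_{D \setminus i}(u,v)$. The coloop case is completely analogous with the roles of $i$ and $\overline{i}$ swapped.

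The main obstacle is the loop/coloop case, where Lemma~\ref{lem:minors} does not provide the formula for $g_D(S' \cup i)$ when $i$ is a loop (or for $g_D(S' \cup \overline{i})$ when $i$ is a coloop). Here one must argue directly, and the careful bookkeeping of the extra $+1$ in the exponent of $v$ (producing the $v \cdot U_{D \setminus i}$ contribution) is what makes the degenerate-case formula involve the coefficient $u + v + 1$ rather than $u + 2$.
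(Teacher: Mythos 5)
Your argument is correct. For the case where $i$ is neither a loop nor a coloop, your decomposition of $\operatorname{AdS}_n$ by the intersection with $\{i,\overline{i}\}$, combined with Proposition~\ref{prop:projection} and Lemma~\ref{lem:minors}, is exactly the paper's proof. Where you diverge is the loop/coloop case: the paper observes that $D$ is then the product of $D\setminus i$ with a one-element delta-matroid having a single feasible set, whose $U$-polynomial is $u+v+1$, and invokes the multiplicativity of $U$ under products (Lemma~\ref{lem:product}). You instead keep the same three-way decomposition and compute the rank function directly, the key point being that for a loop $i$ every feasible set contains $\overline{i}$, so $g_D(S'\cup i)=g_{D\setminus i}(S')-1$ and the exponent of $v$ picks up an extra $1$, yielding the $v\cdot U_{D\setminus i}$ contribution; this computation checks out. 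Your route is slightly more self-contained (it does not need Lemma~\ref{lem:product}) and makes the origin of the $v$ term transparent, while the paper's route is shorter and reuses the product structure. One cosmetic remark: the identity is verified directly for each $n$, so the induction framing is unnecessary (the paper does not induct either), though it does no harm.
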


First we study the behavior of the $U$-polynomial under products. 

\begin{lemma}\label{lem:product}
Let $D_1, D_2$ be delta-matroids on $[n_1, \overline{n}_1]$ and $[n_2, \overline{n}_2]$. Then $U_{D_1 \times D_2}(u, v) = U_{D_1}(u, v) U_{D_2}(u, v)$. 
\end{lemma}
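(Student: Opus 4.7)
The plan is to prove this by direct computation from the definition of $U_D$ (Definition~\ref{def:U}), using the fact that Proposition~\ref{prop:product} already describes how the rank function decomposes on a product.

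First I would observe that the admissible subsets of $[n_1 + n_2, \overline{n_1 + n_2}]$ are in natural bijection with pairs $(S_1, S_2)$ where $S_j \in \operatorname{AdS}_{n_j}$: given $S$, set $S_j = S \cap [n_j, \overline{n}_j]$, and conversely given $(S_1, S_2)$ take $S = S_1 \cup S_2$. Under this bijection, $|S| = |S_1| + |S_2|$, and by Proposition~\ref{prop:product} we have $g_{D_1 \times D_2}(S) = g_{D_1}(S_1) + g_{D_2}(S_2)$.

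Next I would plug these identities into the sum defining $U_{D_1 \times D_2}$. The exponent of $u$ splits as
\[
(n_1 + n_2) - |S| = (n_1 - |S_1|) + (n_2 - |S_2|),
\]
and the exponent of $v$ splits as
\[
\frac{|S| - g_{D_1 \times D_2}(S)}{2} = \frac{|S_1| - g_{D_1}(S_1)}{2} + \frac{|S_2| - g_{D_2}(S_2)}{2}.
\]
Consequently each summand factors as a product of a term depending only on $S_1$ and a term depending only on $S_2$, and the sum over pairs factors as a product of two sums, which are by definition $U_{D_1}(u, v)$ and $U_{D_2}(u, v)$.

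There is no real obstacle here: the only nontrivial input is Proposition~\ref{prop:product}, and once that is in hand the argument is a one-line manipulation of the defining sum. I would just be careful to note at the outset that the admissible-set bijection respects cardinality, so that both the $u$-exponent and the $v$-exponent separate cleanly.
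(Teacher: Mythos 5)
Your proposal is correct and is essentially identical to the paper's proof: both expand the defining sum over pairs $(S_1, S_2)$ of admissible sets and invoke Proposition~\ref{prop:product} to split the rank function, so that the exponents of $u$ and $v$ separate and the double sum factors. The only cosmetic difference is the direction of the computation (you factor $U_{D_1 \times D_2}$, the paper multiplies out $U_{D_1} U_{D_2}$).
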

\begin{proof}
We compute:
\begin{equation*}\begin{split}
U_{D_1}(u, v) U_{D_2}(u, v) & = \left( \sum_{S_1 \in \operatorname{AdS}_{n_1}} u^{n_1 - |S_1|} v^{\frac{|S_1| - g_{D_1}(S_1)}{2}} \right)\left( \sum_{S_2 \in \operatorname{AdS}_{n_2}} u^{n_2 - |S_2|} v^{\frac{|S_2| - g_{D_2}(S_2)}{2}} \right) \\ 
&= \sum_{(S_1, S_2)} u^{n_1 + n_2 - |S_1| - |S_2|} v^{\frac{|S_1| + |S_2| - g_{D_1}(S_1) - g_{D_2}(S_2)}{2}} \\ 
&= \sum_{(S_1, S_2)} u^{n_1 + n_2 - |S_1| - |S_2|} v^{\frac{|S_1| + |S_2| - g_{D_1 \times D_2}(S_1 \cup S_2)}{2}} \\ 
&= U_{D_1 \times D_2}(u, v),
\end{split} \end{equation*}
where the third equality is Proposition~\ref{prop:product}.
\end{proof}

\begin{proof}[Proof of Proposition~\ref{prop:recursive}]
If $n = 0$, then the only admissible subset of $[n, \overline{n}]$ is the empty set, and $g_D(\emptyset) = 0$, so $U_D(u, v) = 1$. Now choose some $i \in [n]$. 

First suppose that $i$ is neither a loop nor a coloop. The admissible subsets of $[n, \overline{n}]$ are partitioned into sets containing $i$, sets containing $\overline{i}$, and sets containing neither $i$ nor $\overline{i}$. If $S$ contains $i$, then $u^{n - |S|}v^{\frac{|S| - g_D(S)}{2}} = u^{n - 1 - |S \setminus i|} v^{\frac{|S\setminus i| - g_{D/i}(S \setminus i)}{2}}$. If $S$ contains $\overline{i}$, then $u^{n - |S|}v^{\frac{|S| - g_D(S)}{2}} = u^{n - 1 - |S \setminus i|} v^{\frac{|S\setminus \overline{i}| - g_{D \setminus i}(S \setminus \overline{i})}{2}}$. If $S$ contains neither $i$ not $\overline{i}$, then $u^{n - |S|}v^{\frac{|S| - g_D(S)}{2}} = u \cdot u^{n - 1 - |S|} v^{\frac{|S| - g_{D(i)}(S)}{2}}$. Adding these up implies the recursion in this case. 

If $i$ is a loop or a coloop, then $D$ is the product of $D \setminus i$ with a delta-matroid on $1$ element with $1$ feasible set. We observe that $U$-polynomial of a delta-matroid on $1$ element with $1$ feasible set is $u + v + 1$, and so Lemma~\ref{lem:product} implies the recursion in this case. 
\end{proof}

\subsection{The independence complex of a delta-matroid}

In this section, we introduce the independence complex of a delta-matroid and use it to study the $U$-polynomial. 

\begin{definition}
We say that $S \in \operatorname{AdS}_n$ is \emph{independent} in $D$ if $g_D(S) = |S|$, or, equivalently, if $S$ is contained in a feasible subset of $D$. The \emph{independence complex} of $D$ is the simplicial complex on $[n, \overline{n}]$ whose facets are given by the feasible sets of $D$. 
\end{definition}

Let $S \in \operatorname{AdS}_n$, and let $T = \{i \in [n] : S \cap \{i, \overline{i}\} = \emptyset\}$. Note $S$ is independent if and only if $S$ is a feasible set of $D(T)$. 
\\ \noindent
The following result is immediate from the definition of $U_D(u, 0)$. 

\begin{proposition}\label{prop:indep}
Let $f_i(D)$ be the number of $i$-dimensional faces of the independence complex of $D$. Then $U_D(u, 0) = f_{n-1}(D) + f_{n-2}(D) u + \dotsb + f_{-1}(D) u^n$.
\end{proposition}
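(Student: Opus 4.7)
The proof is essentially a direct computation from the definition of $U_D(u,v)$, so the plan is short.

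The first step is to substitute $v = 0$ into the formula
$$U_D(u, v) = \sum_{S \in \operatorname{AdS}_n} u^{n - |S|} v^{\frac{|S| - g_D(S)}{2}}.$$
The exponent $\frac{|S| - g_D(S)}{2}$ is a non-negative integer (by parity and the bound $g_D(S) \le |S|$ coming from boundedness and bisubmodularity, as noted just after Definition~\ref{def:U}), so in the specialization $v = 0$ a term survives precisely when this exponent is zero, that is, when $g_D(S) = |S|$. By the definition of independent set, this is exactly the condition that $S$ is a face of the independence complex of $D$.

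Thus
$$U_D(u, 0) = \sum_{\substack{S \in \operatorname{AdS}_n \\ S \text{ independent}}} u^{n - |S|}.$$
The second step is a reindexing: if $S$ has $|S| = k$, then as a face of the independence complex it has dimension $k - 1$, so its contribution is $u^{n-k}$. Collecting terms of a fixed size gives
$$U_D(u, 0) = \sum_{k=0}^{n} f_{k-1}(D)\, u^{n-k} = f_{n-1}(D) + f_{n-2}(D)\, u + \dotsb + f_{-1}(D)\, u^n,$$
as claimed. (The $k = 0$ term corresponds to the empty face, giving $f_{-1}(D) = 1$.) There is no real obstacle in the argument; the only point to verify carefully is that $|S| - g_D(S) \ge 0$, which justifies discarding all non-independent $S$ upon setting $v = 0$.
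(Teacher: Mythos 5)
Your proof is correct and matches the paper's reasoning: the paper treats this as immediate from the definition of $U_D(u,0)$, and your computation (setting $v=0$, noting that $\frac{|S|-g_D(S)}{2}\ge 0$ with equality exactly for independent $S$, then reindexing by size) is precisely that argument spelled out.
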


Note that the $f$-vector of a pure simplicial complex, like the independence complex of a delta-matroid, is a \emph{pure O-sequence}. Then \cite{Hibi89} gives the following inequalities. 

\begin{corollary}\label{cor:pureO}
Let $U_D(u, 0) = a_n + a_{n-1}u + \dotsb a_0u^n$. Then $(a_0, \dotsc, a_n)$ is the $f$-vector of a pure simplicial complex. In particular, $a_i \le a_{n - i}$ for $i \le n/2$ and $a_0 \le a_1 \le \dotsb \le a_{\lfloor \frac{n+1}{2} \rfloor}$. 
\end{corollary}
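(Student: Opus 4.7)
The plan is to combine Proposition~\ref{prop:indep} with a standard fact about $f$-vectors of pure simplicial complexes, and then invoke Hibi's inequalities from \cite{Hibi89}. The statement is essentially immediate once the setup is in place, so I do not expect any real obstacle; the content lies in correctly identifying the relevant pure order ideal.

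First, I would apply Proposition~\ref{prop:indep} to rewrite $a_i = f_{i-1}(D)$, where $f_j(D)$ denotes the number of $j$-dimensional faces of the independence complex of $D$ (with $f_{-1}(D) = 1$ for the empty face). Since the facets of this complex are exactly the feasible sets of $D$, all of which have cardinality $n$, the independence complex is pure of dimension $n-1$. Hence $(a_0, \dotsc, a_n) = (f_{-1}(D), \dotsc, f_{n-1}(D))$ is the $f$-vector of a pure simplicial complex, establishing the first assertion of the corollary.

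Next, I would translate this into the language of pure O-sequences. Assigning to each face $F$ of the independence complex its squarefree indicator monomial $\prod_{a \in F} x_a$ in variables indexed by $[n, \overline{n}]$ produces a pure order ideal of monomials in the divisibility poset, whose Hilbert function in degree $i$ recovers $f_{i-1}(D)$. Thus $(a_0, \dotsc, a_n)$ is a pure O-sequence.

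Finally, I would apply Hibi's theorem \cite{Hibi89}, which asserts that any pure O-sequence $(h_0, \dotsc, h_s)$ is flawless, meaning $h_i \le h_{s-i}$ for all $i \le s/2$, and unimodal up through its midpoint, meaning $h_0 \le h_1 \le \dotsb \le h_{\lfloor (s+1)/2 \rfloor}$. Specializing with $s = n$ to the sequence $(a_0, \dotsc, a_n)$ yields precisely the inequalities in the statement.
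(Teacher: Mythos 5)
Your proposal is correct and follows essentially the same route as the paper: identify the coefficients with the $f$-vector of the independence complex via Proposition~\ref{prop:indep}, note that this complex is pure of dimension $n-1$ because its facets are the feasible sets (all of size $n$), observe that the $f$-vector of a pure complex is a pure O-sequence, and apply Hibi's inequalities from \cite{Hibi89}. The only difference is that you spell out the pure order ideal of squarefree monomials explicitly, which the paper leaves implicit.
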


Proposition~\ref{prop:indep} is a delta-matroid analogue of the fact that, for a matroid $M$, the coefficients of $R_M(u, 0)$, when written backwards, are the face numbers of the independence complex of $M$. The independence complex of a matroid is shellable \cite{BjornerShellable}, which is reflected in the fact that $R_M(u-1, 0)$ has non-negative coefficients. The independence complex of a delta-matroid is not in general shellable or Cohen--Macaulay, and $U_D(u-1, 0)$ can have negative coefficients. 

Recall that $\square = [-1, 1]^n$ is the cube. 
The map $S \mapsto e_S$ induces a bijection between $\operatorname{AdS}_n$ and lattice points of $\square.$
We use this to give a polytopal description of the independent sets of $D$, which will be useful in the sequel. 
\begin{proposition}\label{prop:latticedescrip}
The map $S \mapsto e_S$ induces a bijection between independent sets of $D$ and lattice points in $\frac{1}{2}(P(D) + \square)$.
\end{proposition}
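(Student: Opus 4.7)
The plan is to interpret $\frac{1}{2}(P(D) + \square)$ polyhedrally via the alternative rank function $h_D(S) := \frac{g_D(S) + |S|}{2}$ of Section~\ref{ssec:bouchet}. Since the bisubmodular function attached to $\square = [-1,1]^n$ is $S \mapsto |S|$, Minkowski addition and scaling on the polytope side correspond to addition and scaling of bisubmodular functions, so $\frac{1}{2}(P(D) + \square) = P(h_D)$. The vertices of $P(D)$ lie in $\{\pm 1\}^n \subset \square$, so $P(h_D) \subset \square$, and the lattice points of $\square$ are precisely $\{e_S : S \in \operatorname{AdS}_n\}$. It therefore suffices to show that, for each admissible $S$, the point $e_S$ lies in $P(h_D)$ if and only if $S$ is independent.

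For the $(\Leftarrow)$ direction, I pick a feasible set $B$ containing $S$ and write $e_S = \tfrac{1}{2}\bigl(e_B + (2e_S - e_B)\bigr)$, with $e_B \in P(D)$. A short coordinate check shows $2e_S - e_B \in \square$: in each coordinate $i$, either $i$ or $\bar{i}$ lies in $B$ so $(e_B)_i = \pm 1$, and matching against the three cases $i \in S$, $\bar{i} \in S$, or $\{i, \bar{i}\} \cap S = \emptyset$ always yields a value in $\{-1, 1\}$.

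For the $(\Rightarrow)$ direction, I test the defining inequality of $P(h_D)$ at $T = S$: since $S$ is admissible, the vectors $e_a$ for $a \in S$ are orthonormal, so $\langle e_S, e_S\rangle = |S|$, and $e_S \in P(h_D)$ forces $h_D(S) \ge |S|$. On the other hand, the bound $|g_D(S)| \le |S|$ noted after Definition~\ref{def:U} gives $h_D(S) \le |S|$, so $g_D(S) = |S|$ and $S$ is independent. The entire argument hinges on recognizing $\frac{1}{2}(P(D) + \square) = P(h_D)$; once that is in hand, neither direction presents any real obstacle.
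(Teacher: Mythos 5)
Your proof is correct and takes essentially the same route as the paper: you identify $\tfrac{1}{2}(P(D)+\square)$ with $P(h_D)$ via the bisubmodular-function dictionary and test the inequality $\langle e_S, x\rangle \le h_D(S)$ at $x = e_S$ for the non-independent case, exactly as the paper does. Your decomposition $e_S = \tfrac{1}{2}\bigl(e_B + (2e_S - e_B)\bigr)$ is the paper's $e_S = \tfrac{1}{2}(e_{S \cup T} + e_{S \cup \overline{T}})$ in different notation, with your explicit check that $\tfrac{1}{2}(P(D)+\square) \subset \square$ being a slightly more careful account of why every lattice point is of the form $e_S$.
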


\begin{proof}
If $S$ is independent in $D$, then there is $T \in \operatorname{AdS}_n$ such that $S \cup T \in \mathcal{F}$. Then $e_S = \frac{1}{2}(e_{S \cup T} + e_{S \cup \overline{T}})$, so $e_S$ lies in $\frac{1}{2}(P(D) + \square)$. 

The correspondence between normalized bisubmodular functions and polytopes gives that 
$$\frac{1}{2}(P(D) + \square) = \left \{x : \langle e_S, x \rangle \le \frac{g_D(S) + |S|}{2} \right \}.$$
If $S$ is not independent, then $e_S$ violates the inequality $\langle e_S, e_S \rangle \le \frac{g_D(S) + |S|}{2}$, so $e_S$ does not lie in $\frac{1}{2}(P(D) + \square)$. 
\end{proof}

\subsection{The activity expansion of the $U$-polynomial}\label{ssec:activity}

We now discuss an expansion of $U_D(u, v-1)$ in terms of a statistic associated to each independent set of a delta-matroid $D$, similar to the expansion of the Tutte polynomial of a matroid in terms of basis activities. We rely heavily on the work of Morse \cite{MorseActivity}, who gave such an expansion for the interlace polynomial $U_D(0, v-1)$. Throughout we fix the ordering $1 < 2 < \dotsb < n$ on $[n]$. For $S \in \operatorname{AdS}_n$, let $\underline{S} \subset [n]$ denote the unsigned version of $S$, i.e., the image of $S$ under the quotient of $[n, \overline{n}]$ by the involution $\overline{(\cdot)}$. 

\begin{definition}
Let $B$ be a feasible set in a delta-matroid $D$. We say that $i \in [n]$ is \emph{$B$-orientable} if the symmetric difference $B \Delta \{i, \bar{i}\}$ is not a feasible set of $D$. We say that $i$ is \emph{$B$-active} if $i$ is $B$-orientable and there is no $j < i$ such that $B \Delta \{i, j, \bar{i}, \bar{j}\}$ is a feasible set of $D$. For an independent set $I$ of $D$, we say that $i \in \underline{I}$ is \emph{$I$-active} if $i$ is $I$-active in the projection $D([n] \setminus \underline{I})$. Let $a(I)$ denote the number of $i \in \underline{I}$ which are $I$-active. 
\end{definition}

\begin{theorem}\label{thm:activity}
Let $D$ be a delta-matroid on $[n, \bar{n}]$. Then
$$U_D(u, v-1) = \sum_{I \text{ independent in }D} u^{n - |I|} v^{a(I)}.$$
\end{theorem}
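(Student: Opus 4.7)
The plan is to reduce the claim to Morse's activity expansion \cite{MorseActivity} of the interlace polynomial $U_D(0, v-1)$ by exploiting projections. The definition of $I$-activity for an independent set $I$ has been tailored precisely so that it reduces to $B$-activity for a feasible set $B$ of the projection $D([n] \setminus \underline{I})$. This suggests the strategy: partition independent sets of $D$ by their unsigned support and apply Morse's theorem to each projection.

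The first step is the auxiliary identity
\begin{equation*}
U_D(u, v) = \sum_{T \subset [n]} u^{|T|} \, U_{D(T)}(0, v).
\end{equation*}
I would prove this by partitioning $\operatorname{AdS}_n$ according to $T(S) := [n] \setminus \underline{S}$. For a fixed $T \subset [n]$, the admissible sets $S \in \operatorname{AdS}_n$ with $\underline{S} = [n] \setminus T$ are exactly the admissible sets of maximum size for the ground set $[n, \overline{n}] \setminus (T \cup \overline{T})$; each such $S$ has $|S| = n - |T|$ and, by Proposition~\ref{prop:projection}, satisfies $g_D(S) = g_{D(T)}(S)$. Reading off the contribution of $S$ to $U_D(u, v)$ as $u^{|T|} v^{(|S| - g_{D(T)}(S))/2}$ and recognizing that the inner sum is precisely the $u=0$ specialization of $U_{D(T)}$ gives the identity.

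The second step is to substitute $v \mapsto v-1$ and apply Morse's activity expansion of the interlace polynomial to each summand, giving
\begin{equation*}
U_{D(T)}(0, v-1) = \sum_{B \text{ feasible in } D(T)} v^{a_{D(T)}(B)}.
\end{equation*}
Feasible sets of $D(T)$ are, by construction, exactly the independent sets $I$ of $D$ with $\underline{I} = [n] \setminus T$, and under this identification the activity $a(I)$ defined in the theorem equals $a_{D(T)}(I)$ by definition. Combining, the exponent $|T|$ becomes $n - |I|$, and summing over $T$ repackages the double sum as a single sum over all independent sets of $D$, yielding exactly the formula in the statement.

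Structurally, the argument is essentially free once Morse's result is granted: step one is pure bookkeeping on admissible sets via Proposition~\ref{prop:projection}, and step three matches definitions directly. The main obstacle, if any, is conventional --- one must verify that the notion of $B$-activity used here (with the chosen ordering $1 < 2 < \dotsb < n$ and the specific pair-swap condition $B \Delta \{i, j, \bar{i}, \bar{j}\}$) matches the version proved in \cite{MorseActivity}. Given that the definition above is evidently set up to mirror Morse's, this compatibility check should amount to translating notation rather than a substantive argument.
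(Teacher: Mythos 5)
Your proof is correct and takes essentially the same route as the paper: both reduce the statement to Morse's activity expansion of the interlace polynomial via the decomposition $U_D(u,v)=\sum_{T\subset[n]}u^{|T|}\,U_{D(T)}(0,v)$ together with the fact that each independent set of $D$ is a feasible set of exactly one projection, with the activity matching by definition. The only (minor) difference is that you verify this projection identity directly from Definition~\ref{def:U} using Proposition~\ref{prop:projection}, which is a correct bookkeeping argument, whereas the paper quotes it as \cite[Proposition 5.2]{EFLS}.
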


\begin{proof}
By \cite[Corollary 5.3]{MorseActivity}, this holds after we evaluate at $u=0$ for any delta-matroid $D$. By \cite[Proposition 5.2]{EFLS}, we have that
$$U_D(u, v-1) = \sum_{S \subset [n]} u^{n - |S|}U_{D([n] \setminus S)}(0, v-1).$$
The result follows because each independent set $I$ is a feasible set of exactly one projection of $D$. 
\end{proof}

Theorem~\ref{thm:activity} implies that the coefficient of  $u^{n-i}$ in $U_D(u, -1)$ counts the number of independent sets of size $i$ with $a(I) = 0$. This is analogous to how the coefficient of $u^{r - i}$ in $R_M(u, -1)$ counts the number of independent sets of external activity zero in a matroid $M$, which form the faces of dimension $i - 1$ in the broken circuit complex of $M$  \cite{BackmanTutte}. This interpretation in terms of a simplicial complex generalizes to delta-matroids. 

\begin{proposition}\label{prop:complex}
The independent sets $I$ of $D$ with $a(I) = 0$ form a simplicial complex on $[n, \bar{n}]$. 
\end{proposition}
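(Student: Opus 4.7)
The goal is to show that if $I$ is independent in $D$ with $a(I) = 0$ and $x \in I$, then $I' := I \setminus x$ is also independent with $a(I') = 0$. Independence obviously passes to subsets, so the whole task is controlling the activity statistic. My plan is to exploit the relationship between the two projections at which activity is computed. Setting $D_I := D([n] \setminus \underline{I})$ and $D_{I'} := D([n] \setminus \underline{I'})$, because projections commute and $\underline{I'} = \underline{I} \setminus \{\underline{x}\}$, we have $D_{I'} = D_I(\underline{x})$. From the definition of projection, a set $S$ on the ground set of $D_{I'}$ is feasible in $D_{I'}$ if and only if $S \cup \underline{x}$ or $S \cup \overline{\underline{x}}$ is feasible in $D_I$. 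This translation will be the main technical tool.

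Fix $i \in \underline{I'} \subset \underline{I}$. Since $a(I) = 0$ in $D_I$, one of the following holds: (A) $I \Delta \{i, \bar{i}\}$ is feasible in $D_I$, or (B) $I \Delta \{i, \bar{i}\}$ is not feasible in $D_I$ but some $j \in \underline{I}$ with $j < i$ satisfies that $I \Delta \{i, j, \bar{i}, \bar{j}\}$ is feasible in $D_I$. I would then verify that in each case, $i$ is not $I'$-active in $D_{I'}$. In case (A), using that $x \notin \{i, \bar{i}\}$ and $I = I' \cup x$, the projection criterion gives that $I' \Delta \{i, \bar{i}\}$ is feasible in $D_{I'}$, so $i$ is not $I'$-orientable. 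In case (B) with $j \neq \underline{x}$, the analogous peeling shows $I' \Delta \{i, j, \bar{i}, \bar{j}\}$ is feasible in $D_{I'}$ while $j$ lies in $\underline{I'}$, so $i$ is blocked by the same smaller index. In case (B) with $j = \underline{x}$, the only element from $\{\underline{x}, \overline{\underline{x}}\}$ appearing in $I \Delta \{i, \underline{x}, \bar{i}, \overline{\underline{x}}\}$ is the one opposite to $x$, so the projection criterion again gives $I' \Delta \{i, \bar{i}\}$ feasible in $D_{I'}$, violating $I'$-orientability.

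The main obstacle is the last case: when the smaller index witnessing inactivity of $i$ in $D_I$ coincides with the index of the element being removed. There the conclusion changes character --- $i$ is no longer blocked by a smaller $I'$-activating index, but instead becomes $I'$-unorientable, with the extra freedom to switch $\underline{x}$ in $D_I$ absorbed into the projection down to $D_{I'}$. Once this case is untangled, the rest is a routine bookkeeping exercise on symmetric differences.
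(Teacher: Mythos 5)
Your proposal is correct and follows essentially the same route as the paper's proof: reduce to showing non-activity is preserved under projecting away the removed element(s), then split into the three cases (the element $i$ is already non-orientable; the witness $j$ survives the projection; the witness $j$ is the projected element, in which case $i$ becomes non-orientable). The only cosmetic difference is that you remove one element at a time via $D_{I'} = D_I(\underline{x})$, while the paper projects away a whole set $S$ in one step.
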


\begin{proof}
It suffices to check that if $i$ is not $B$-active for some feasible set $B$ of $D$ and $S \subset [n] \setminus i$, then $i$ is not active for $B \setminus (S \cup \bar{S})$. Because $i$ is not $B$-active, either $B \Delta \{i, \bar{i}\}$ is feasible (which remains true after we project away from $S$), or there is $j < i$ such that $B \Delta \{i, j, \bar{i}, \bar{j}\}$ is feasible. If $j \not \in S$, then this remains true after we project away from $S$. If $j \in S$, then $i$ is not $B \setminus (S \cup \bar{S})$-orientable. 
\end{proof}

This complex can be complicated; for instance, its dimension is not easy to predict. The following example shows that the complex defined above need not be pure, so we cannot use it to deduce that $U_D(u, -1)$ is pure O-sequence as in Corollary~\ref{cor:pureO}. 

\begin{example}
Let $D$ be the delta-matroid on $[3, \bar{3}]$ with feasible sets $\{1, \bar{2}, \bar{3}\}, \{\bar{1}, 2, \bar{3}\}$, and $\{\bar{1}, \bar{2}, 3\}$. Every element of $[3, \bar{3}]$ has no active elements, the sets $\{\bar{1}, 2\}, \{\bar{1}, \bar{2}\}, \{\bar{2}, 3\}, \{\bar{2}, \bar{3}\}, \{\bar{1}, 3\}$, and $\{\bar{1}, \bar{3}\}$ are the independent sets of size $2$ with no active elements, and every feasible set has an active element. The complex defined in Proposition~\ref{prop:complex} has $f$-vector $(1, 6, 6)$, so $U_D(u, -1) = 6u + 6u^2 + u^3$. This complex is not pure because $1$ is not contained in any facet. 
\end{example}

\subsection{Enveloping matroids}
We now recall the definition of an enveloping matroid of a delta-matroid, which was introduced for algebro-geometric reasons in \cite[Section 6]{EFLS}. A closely related notion was considered in \cite{BouShelter}, see Remark~\ref{rem:sheltering}.

For $S \subseteq [n, \bar{n}]$, let $u_S$ denote the corresponding indicator vector in $\mathbb{R}^{[n, \bar{n}]}$. 
For a matroid $M$ on $[n, \bar{n}]$, let $P(M) = \operatorname{Conv}\{u_B : B \text{ basis of }M\}$, and let $IP(M) = \operatorname{Conv}\{u_S : S \text{ independent in M}\}$. 

\begin{definition}\label{def:env}
Let $\operatorname{env} \colon \mathbb{R}^{[n, \overline{n}]} \to \mathbb{R}^n$ be the map given by $(x_1, \dotsc, x_n, x_{\overline{1}}, \dotsc, x_{\overline{n}}) \mapsto (x_1 - x_{\overline{1}}, \dotsc, x_n - x_{\overline{n}})$. 
Let $D$ be a delta-matroid on $[n, \overline{n}]$, and let $M$ be a matroid on $[n, \overline{n}]$. We say that $M$ is an \emph{enveloping matroid} for $D$ if $\operatorname{env}(P(M)) = P(D)$. 
\end{definition}

Note that enveloping matroids necessarily have rank $n$. In \cite[Section 6.3]{EFLS}, it is shown that many different types of delta-matroids have enveloping matroids, such as realizable delta-matroids, delta-matroids arising from the independent sets or bases of a matroid, and delta-matroids associated to graphs or embedded graphs. We will need the following property of enveloping matroids. 

\begin{proposition}\label{prop:envelopingindep}
Let $M$ be an enveloping matroid for a delta-matroid $D$ on $[n, \overline{n}]$. Let $S \in \operatorname{AdS}_n$ be an admissible set. Then $S$ is independent in $M$ if and only if it is independent in $D$. 
\end{proposition}

\begin{proof}
If $S \in \operatorname{AdS}_n$, then $\operatorname{env}(u_S) = e_S$, and $S$ is the only admissible set with this property. Furthermore, if $S \in \operatorname{AdS}_n$ has size $n$, then $u_S$ is the only indicator vector of a subset of $[n, \bar{n}]$ of size $n$ which is a preimage of $e_S$ under $\operatorname{env}$. 
Because $\operatorname{env}(P(M)) = P(D)$, we see that if $B$ is a feasible set of $D$, then $B$ is a basis for $M$. This implies that the independent sets in $D$ are independent in $M$. 

By \cite[Lemma 7.6]{EFLS}, $\operatorname{env}(IP(M)) = \frac{1}{2}(P(D) + \square)$. If $S$ is admissible and independent in $M$, then $\operatorname{env}(u_S) = e_S \in \frac{1}{2}(P(D) + \square)$, so by Proposition~\ref{prop:latticedescrip}, $S$ is independent in $D$. 
\end{proof}

\begin{remark}\label{rem:sheltering}
A matroid $M$ on $[n, \bar{n}]$ is a \emph{sheltering matroid} for a delta-matroid $D$ if every independent set of $D$ is independent in $M$. Equivalently, $M$ is sheltering if the restriction of the rank function of $M$ to $\operatorname{AdS}_n$ is $h_D$. The proof of Proposition~\ref{prop:envelopingindep} shows that if $M$ is an enveloping matroid for $D$, then it is also a sheltering matroid. The converse is false, see \cite[Remark 6.7]{EFLS}. 
\end{remark}

\subsection{Lorentzian polynomials}

For a multi-index $\textbf{m} = (m_0, m_1, \dotsc)$, let $w^{\textbf{m}} = w_0^{m_0}w_1^{m_1} \dotsb$. A homogeneous polynomial $f(w_0, w_1, \dotsc)$ of degree $d$ with real coefficients is said to be \emph{strictly Lorentzian} if all its coefficients are positive, and the quadratic form obtained by taking $d-2$ partial derivatives is nondegenerate with exactly one positive eigenvalue. We say that $f$ is \emph{Lorentzian} if it is a coefficient-wise limit of strictly Lorentzian polynomials. Lorentzian polynomials enjoy strong log-concavity properties, and the class of Lorentzian polynomials is preserved under many natural operations. 

The following lemma is a special case of \cite[Proposition 3.3]{DualRoss}. Alternatively, it can be deduced from the proof of \cite[Corollary 3.5]{BH}. We thank Nima Anari for discussing this lemma with us. 

\begin{lemma}\label{lem:multiaffine}
For a polynomial $f(w_0, w_1, \dotsc) = \sum_{\textbf{m}} c_{\textbf{m}} w^{\textbf{m}}$, let
$$\overline{f}(w_0, w_1, \dotsc) = \sum_{\textbf{m}: m_i \le 1 \text{ for }i \not= 0} c_{\textbf{m}} w^{\textbf{m}}.$$
If $f$ is Lorentzian, then $\overline{f}$ is Lorentzian. 
\end{lemma}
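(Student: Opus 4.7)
The strategy is to reduce to the known fact that multi-affine projection in all variables preserves the Lorentzian class (\cite[Corollary 3.5]{BH}), by first polarizing the distinguished variable $w_0$. The point is that the operation $f \mapsto \overline f$ in the lemma is a ``partial'' multi-affine projection (it spares $w_0$), and polarization converts it into a genuine multi-affine projection.

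Let $d = \deg f$ and introduce fresh variables $t_1, \dotsc, t_d$. The polarization of $f$ in $w_0$ is the unique polynomial $\tilde f(t_1, \dotsc, t_d, w_1, w_2, \dotsc)$ that is symmetric and multi-affine in $t_1, \dotsc, t_d$ and satisfies $\tilde f|_{t_1 = \dotsb = t_d = w_0} = f$. By the Br\"and\'en--Huh polarization theorem, $\tilde f$ is Lorentzian. Because $\tilde f$ is already multi-affine in the $t_i$, the multi-affine projection of $\tilde f$ with respect to \emph{all} of its variables amounts to discarding the monomials in which some $w_i$ with $i \ge 1$ appears to degree $\ge 2$. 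Call the resulting polynomial $\tilde h$. By \cite[Corollary 3.5]{BH}, $\tilde h$ is Lorentzian.

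The polynomial $\tilde h$ is still symmetric and multi-affine in $t_1, \dotsc, t_d$, so it is the polarization in $w_0$ of $h := \tilde h|_{t_1 = \dotsb = t_d = w_0}$. Applying the reverse direction of the polarization theorem then gives that $h$ is Lorentzian. To finish, I will check that $h = \overline f$: the operations ``polarization in $w_0$'' and ``multi-affine projection in $w_1, w_2, \dotsc$'' act on disjoint sets of variables and hence commute, so $\tilde h$ can equally be obtained by first projecting $f$ onto its multi-affine-in-$(w_1, w_2, \dotsc)$ part (which is $\overline f$) and then polarizing in $w_0$. Unpolarizing this second description returns $\overline f$, so $h = \overline f$.

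The main obstacle is conceptual rather than computational: one must recognize that the exceptional variable $w_0$ can be ``absorbed'' by polarization, bringing the situation within reach of the total multi-affine projection result. The two polarization steps and the commutativity check are then routine bookkeeping.
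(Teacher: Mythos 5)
Your reduction is structurally sound, and each bookkeeping step checks out: a polynomial that is symmetric and multi-affine in $t_1,\dotsc,t_d$ is indeed the polarization (in those variables) of its diagonal specialization; the multi-affine projection in $w_1,w_2,\dotsc$ touches only those exponents and so commutes with (un)polarization of $w_0$; and specializing $t_1=\dotsb=t_d=w_0$ is a nonnegative linear substitution, so it preserves the Lorentzian property by \cite[Theorem 2.10]{BH}. Two caveats. First, a minor one: polarizing a \emph{single} variable is not literally the Br\"and\'en--Huh polarization operator (their $\Pi^{\uparrow}$ polarizes with respect to a full degree vector), so you should add the one-line remark that partial polarization in $w_0$ is obtained by polarizing all variables and then re-identifying the copies of each $w_i$, $i\ge 1$, both of which preserve Lorentzianness. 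Second, and more importantly, the load-bearing input of your argument is the claim that the \emph{total} multi-affine projection preserves Lorentzian polynomials, which you attribute to \cite[Corollary 3.5]{BH}; the paper itself only asserts that the lemma can be extracted from the \emph{proof} of that corollary, so you should verify that the statement you are invoking is actually what is asserted there, since otherwise your proof rests on an unsupported step that is essentially of the same strength as the lemma itself.

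For comparison, the paper gives no argument at all: it cites \cite[Proposition 3.3]{DualRoss}, a truncation theorem (discarding all monomials $w^{\mathbf{m}}$ with $\mathbf{m}\not\le\delta$ preserves Lorentzianness, for any degree bound $\delta$), of which the lemma --- with its exemption of $w_0$ built in by taking $\delta_0$ large and $\delta_i=1$ for $i\ne 0$ --- is already a direct special case, so the polarization detour is unnecessary there. If the statement you want is not available verbatim in \cite{BH}, you can close the gap without your polarization step by using the Br\"and\'en--Huh criterion for linear operators with Lorentzian symbol: the operator sending $w^{\mathbf{m}}$ to itself when $m_i\le 1$ for all $i\ne 0$ and to $0$ otherwise, restricted to polynomials of componentwise degree at most $\kappa$, has symbol $(w_0+v_0)^{\kappa_0}\prod_{i\ne 0} v_i^{\kappa_i-1}\bigl(v_i+\kappa_i w_i\bigr)$, a product of linear forms with nonnegative coefficients, hence Lorentzian; this yields $\overline{f}$ Lorentzian directly.
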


For $S \in \operatorname{AdS}_n$, recall that $\underline{S} \subset [n]$ denotes the unsigned version of $S$. For a set $T$, let $w^T = \prod_{a \in T} w_a$. We now state a strengthening of Theorem~\ref{theorem:logconc}. 
\begin{theorem}\label{thm:lor}
Let $D$ be a delta-matroid on $[n, \overline{n}]$ which has an enveloping matroid. Then the polynomial
$$\sum_{S \text{ independent in }D} w_0^{2n - |S|} w^{\underline{S}} \in \mathbb{R}[w_0, w_1, \dotsc, w_n]$$
is Lorentzian. 
\end{theorem}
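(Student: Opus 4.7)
The plan is to derive the Lorentzianness of the claimed polynomial from that of the independence generating polynomial of an enveloping matroid $M$ of $D$, using the substitution calculus for Lorentzian polynomials together with Lemma~\ref{lem:multiaffine}.

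Fix an enveloping matroid $M$ of $D$; it is a rank-$n$ matroid on the $2n$-element ground set $[n, \bar{n}]$. By \cite[Theorem 4.10]{BH}, the Potts-model / homogenized independence polynomial
\[
I_M(w_0, \{w_a\}_{a\in[n,\bar{n}]}) \;=\; \sum_{I\text{ indep in }M} w_0^{\,n-|I|}\prod_{a\in I} w_a
\]
is Lorentzian of degree $n$. Next, I would perform the diagonal substitution $w_i \mapsto z_i$ and $w_{\bar{i}}\mapsto z_i$ for each $i\in [n]$: this is a substitution by non-negative linear forms, and therefore preserves the Lorentzian class by the standard substitution calculus for Lorentzian polynomials. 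Multiplying the result by $w_0^{n}$ also preserves Lorentzianness (since products of Lorentzian polynomials are Lorentzian) and yields
\[
F(w_0,z_1,\dots,z_n) \;=\; \sum_{I\text{ indep in }M} w_0^{\,2n-|I|}\prod_{i\in[n]} z_i^{\,|I\cap\{i,\bar{i}\}|},
\]
a Lorentzian polynomial of degree $2n$.

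The crucial observation is that the truncation $\overline{F}$ furnished by Lemma~\ref{lem:multiaffine} (with $w_0$ as the distinguished variable) retains exactly those monomials in which each $z_i$ appears with exponent at most $1$ — equivalently, those independent sets $I$ of $M$ satisfying $|I\cap\{i,\bar{i}\}|\le 1$ for every $i$, which is precisely the admissibility condition. Invoking Proposition~\ref{prop:envelopingindep} to identify admissible independent sets of $M$ with independent sets of $D$, and renaming $z_i$ to $w_i$, gives
\[
\overline{F}(w_0,w_1,\dots,w_n) \;=\; \sum_{S\text{ indep in }D} w_0^{\,2n-|S|}\, w^{\underline{S}},
\]
which is therefore Lorentzian.

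The real content of the argument is recognizing this three-step recipe: first invoke the Lorentzianness of the independence polynomial of $M$ from \cite{BH}, then perform the diagonal substitution $w_i,w_{\bar{i}}\mapsto z_i$ so that admissibility corresponds to multi-affineness in the $z_i$, and finally apply Lemma~\ref{lem:multiaffine} to cut out the admissible part. Each individual step is a routine Lorentzian-preservation property, so the main obstacle is noticing how they combine to transform the matroidal statement on $[n,\bar{n}]$ into the delta-matroidal statement on $[n]$.
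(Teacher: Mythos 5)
Your overall strategy (diagonalize $w_{\bar i}\mapsto w_i$ so that admissibility becomes multi-affineness, truncate via Lemma~\ref{lem:multiaffine}, identify the result via Proposition~\ref{prop:envelopingindep}) is exactly the paper's, and those steps are fine. The genuine problem is your first step. You claim, citing \cite[Theorem 4.10]{BH}, that the \emph{rank}-homogenized independence polynomial $\sum_{I} w_0^{\,n-|I|}\prod_{a\in I}w_a$ (degree $n=\operatorname{rk}(M)$, ground set of size $2n$) is Lorentzian. That statement is false in general: specializing all $w_a$ to a single variable $y$ would force the sequence $(I_k)$ of independence numbers to be ultra log-concave with respect to the \emph{rank}, and already $M=U_{2,4}$ on $[2,\bar 2]$ gives $(I_0,I_1,I_2)=(1,4,6)$ with $(I_1/\binom{2}{1})^2=4<6=I_0I_2$; note that $U_{2,4}$ is an enveloping matroid (of the delta-matroid on $[2,\bar 2]$ with all four feasible sets), so this failure occurs in exactly the situation of the theorem. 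Moreover \cite[Theorem 4.10]{BH} is the statement that the Potts model partition function is Lorentzian; it does not directly assert Lorentzianity of any independence generating polynomial. Your subsequent multiplication by $w_0^n$ does produce the correct polynomial $\sum_I w_0^{\,2n-|I|}w^I$, and multiplying by $w_0^n$ does preserve the Lorentzian property, but this does not repair the argument: the implication runs the wrong way, since you are deducing Lorentzianity of the product from Lorentzianity of a factor that is in fact not Lorentzian (indeed this example shows that $w_0^n\cdot g$ can be Lorentzian while $g$ is not).

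The correct input, which is what the paper uses, is that the \emph{ground-set}-homogenized polynomial $\sum_{I\ \text{indep in }M} w_0^{\,2n-|I|}w^I$ is Lorentzian; this is established in the proof of \cite[Theorem 4.14]{BH} (it is ultimately deduced there from the Potts model result, but that deduction is a nontrivial limiting argument, not a direct quotation of Theorem 4.10). If you replace your first two steps by a citation of that fact, no multiplication by $w_0^n$ is needed and the rest of your argument — diagonal substitution by nonnegative linear forms (\cite[Theorem 2.10]{BH}), truncation to the multi-affine part in the variables other than $w_0$ via Lemma~\ref{lem:multiaffine}, and the identification of admissible independent sets of $M$ with independent sets of $D$ via Proposition~\ref{prop:envelopingindep} — goes through verbatim and coincides with the paper's proof.
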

\begin{remark}
In \cite[Theorem 8.1]{EFLS}, it is proven that if $D$ has an enveloping matroid, then the polynomial
$$\sum_{S \text{ independent in }D} \frac{w_0^{|S|}}{|S|!} w^{[n] \setminus \underline{S}} \in \mathbb{R}[w_0, w_1, \dotsc, w_n]$$
is Lorentzian.
\end{remark}

\begin{proof}[Proof of Theorem~\ref{theorem:logconc}]
By \cite[Theorem 2.10]{BH}, the specialization 
$$\sum_{S \text{ independent in }D} w_0^{2n - |S|} y^{|S|} = \sum_{i=0}^{n} f_{i-1}(D) w_0^{2n - i} y^i$$ is Lorentzian. By \cite[Example 2.26]{BH}, the coefficients of a Lorentzian polynomial in two variables of degree $2n$ are log-concave after dividing the coefficient of $w_0^{2n - i}y^i$ by $\binom{2n}{i}$, which implies the result.  
\end{proof}

\begin{proof}[Proof of Theorem~\ref{thm:lor}]
Let $M$ be an enveloping matroid of $D$. By \cite[Proof of Theorem 4.14]{BH}, the polynomial
$$\sum_{S \text{ independent in }M} w_0^{2n - |S|} w^S \in \mathbb{R}[w_0, w_1, \dotsc, w_n, w_{\overline{1}}, \dotsc, w_{\overline{n}}]$$
is Lorentzian. Setting $w_{\overline{i}} = w_i$, by \cite[Theorem 2.10]{BH} the polynomial
$$\sum_{S \text{ independent in }M} w_0^{2n - |S|} w^{S \cap [n]} w^{\overline{S \cap [\overline{n}]}} \in \mathbb{R}[w_0, w_1, \dotsc, w_n]$$
is Lorentzian. A term $w_0^{2n - |S|} w^{S \cap [n]} w^{\overline{S \cap [\overline{n}]}}$ has degree at most $1$ in each of the variables $w_1, \dotsc, w_n$ if and only if $S$ is admissible, in which case it is equal to $w^{\underline{S}}$. Therefore, by Lemma~\ref{lem:multiaffine}, the polynomial 
$$\sum_{S \in \operatorname{AdS}_n \text{ independent in }M} w_0^{2n - |S|} w^{\underline{S}} \in \mathbb{R}[w_0, w_1, \dotsc, w_n]$$
is Lorentzian. By Proposition~\ref{prop:envelopingindep}, this polynomial is equal to the polynomial in Theorem~\ref{thm:lor}.
\end{proof}

\begin{remark}
Let $(U, \Omega, r)$ be a multimatroid \cite{BouShelter}, i.e., $U$ is a finite set, $\Omega$ is a partition of $U$, and $r$ is a function on partial transversals of $\Omega$ satisfying certain conditions. An \emph{independent set} is a partial transversal $S$ of $\Omega$ with $r(S) = |S|$. A multimatroid is called \emph{shelterable} if $r$ can be extended to the rank function of a matroid on $U$. Then the argument used to prove Theorem~\ref{theorem:logconc} shows that, if $a_k$ is the number of independent sets of a shelterable multimatroid of size $k$, then
$$a_k^2 \ge \frac{|U| - k + 1}{|U| - k} \frac{k+1}{k} a_{k+1}a_{k-1}.$$
In particular, the proof of Theorem~\ref{theorem:logconc} only requires $D$ to be shelterable. 
\end{remark}

\bibliographystyle{alpha}
\bibliography{matroid.bib}

\end{document}